\documentclass[a4paper,12pt]{article}
\usepackage{amssymb, amsmath, afterpage}
\usepackage{amsfonts}
\usepackage[dvips]{graphics}
\textheight 250mm \textwidth 165mm \oddsidemargin -5mm
\evensidemargin -5mm \topmargin -20mm

\newcommand{\mybox}{\hfill $\square$}
\newcommand{\+}[1]{\ensuremath{\overset{+}{#1}}}
\newcommand{\pli}[1]{\ensuremath{\overset{+}{\imath}}}
\newcommand{\mi}[1]{\ensuremath{\overset{-}{#1}}}

\newcommand{\sym}{\ensuremath{\mathrm{Sym}}}
\newcommand{\dih}{\ensuremath{\mathrm{Dih}}}
\newcommand{\I}{\ensuremath{\mathcal{I}}}
\newcommand{\J}{\ensuremath{\mathcal{J}}}
\newcommand{\ip}[2]{\ensuremath{\langle{#1},{#2}\rangle}} 

\newcommand{\supp}{\ensuremath{\mathrm{supp}}}
\newcommand{\qed}{\hfill $\square$ \\}

\newtheorem{thm}{Theorem}[section]
\newtheorem{lemma}[thm]{Lemma}         
\newtheorem{prop}[thm]{Proposition}

\parindent 0pt
\newenvironment{proof}{\paragraph{Proof}}{\mybox}
\begin{document}
    \title{\Large\textbf{Zero Excess and Minimal Length in Finite Coxeter Groups}}
\date{}
      \author{S.B. Hart and P.J. Rowley\thanks{The authors wish to acknowledge partial support for this work from
      the Birkbeck Faculty of Social
      Science and the Manchester Institute for Mathematical Sciences (MIMS).
      }}
  \maketitle
\vspace*{-10mm}
\begin{abstract} \noindent Let $\mathcal{W}$ be the set of strongly real elements of $W$, a Coxeter group.
Then for $w \in \mathcal{W}$, $e(w)$, the excess of $w$, is
defined by $e(w) = \min\{\ell(x) + \ell(y) - \ell(w) \; | \; w=xy,
x^2 = y^2 = 1\}$. When $W$ is finite we may also define $E(w)$,
the reflection excess of $w$. The main result established here is
that if $W$ is finite and $X$ is a $W$-conjugacy class, then there
exists $w \in X$ such that $w$ has minimal length in $X$ and $e(w)
= 0 = E(w)$. (MSC2000: 20F55)
\end{abstract}

\section{Introduction}\label{intro}

Suppose that $W$ is a Coxeter group and let $\mathcal{W}$ denote
the set of strongly real elements of $W$. So

$$ \mathcal{W} =
\{ w \in W \;| \; w = xy \; \mbox{where} \; x, y \in W \; \mbox{and} \; x^2 = y^2 = 1 \}.$$
Let $w \in \mathcal{W}$. Then the \textit{excess} of $w$, $e(w)$, is defined by

$$e(w) = \min\{\ell(x) + \ell(y) - \ell(w) \; | \; w=xy, x^2 = y^2 =
1\}.$$
In \cite{invprod} it is shown that any element $w \in \mathcal{W}$ is $W$-conjugate
to an element whose excess equals zero. Or, in other words, $w$ is $W$-conjugate to an
element $xy$ where $x$ and $y$ are involutions or the identity element and
$\ell(xy) = \ell(x) + \ell(y)$. The present paper explores this theme further
in the case when $W$ is finite. When $W$ is
finite it is well-known that $W = \mathcal{W}$. From Carter's seminal paper \cite{carter}, it
follows that, in fact, each $w \in W$ can be expressed in the form
$w = xy$ where $x^2 = y^2 = 1$ and
$$V_{-1}(x) \cap V_{-1}(y) = \{0\}.$$ Here $V$ is a reflection module for $W$
(to be defined in Section \ref{back}) and  $V_{\lambda}(x)$
denotes the $\lambda$-eigenspace of $x$ on $V$. We note
\cite{carter} only considers the Weyl groups -- for a proof covering all finite Coxeter groups
see Lemma \ref{h3h4}. Writing $L(w)$ for the reflection length of
$w$, this means that
$$L(w) = L(x) + L(y).$$

With this in mind we define the {\sl reflection excess} of $w$,
denoted $E(w)$, to be

$$E(w) = \min\{\ell(x) + \ell(y) - \ell(w) \; | \; w=xy, x^2 = y^2 =
1, L(w) = L(x) + L(y)\}.$$

\begin{table}[h!tb]\begin{center}\begin{tabular}{cccr}
$x$&$y$&$L(x)+L(y)$&$\ell(x)+\ell(y)$\\
\hline %
$(14)(23)$&$(15)(26)$&4&$6+12=18$\\
$(14)(36)$&$(15)(23)$&4&$8+8=16$\\
$(14)(26)$&$(15)(36)$&4&$10+10=20$\\
$(15)(23)$&$(45)(26)$&4&$8+8=16$\\
$(15)(36)$&$(45)(23)$&4&$10+2=12$\\
$(15)(26)$&$(45)(36)$&4&$12+6=18$\\
$(45)(23)$&$(14)(26)$&4&$2+10=12$\\
$(45)(36)$&$(14)(23)$&4&$6+6=12$\\
$(45)(26)$&$(14)(36)$&4&$8+8=16$\\
$(12)(46)(35)$&$(13)(24)(56)$&6&$5+5=10$\\
$(13)(24)(56)$&$(16)(25)(34)$&6&$5+15=20$\\
$(16)(25)(34)$&$(12)(35)(46)$&6&$15+5=20$%
\end{tabular}\end{center}\caption{\label{145236}$(145)(236) = xy$}\end{table} %

It is clear that $E(w) \geq e(w)$. However, it is not always the case that $E(w) = e(w)$.
Consider the element $w=(145)(236)$ of $\sym(6)$, the Coxeter group of type $A_5$. We have $L(w) = 4$,
$\ell(w) = 10$ and Table \ref{145236} gives the possibilities for $x$ and $y$ with $w=xy$. As can be
seen, $E(w) = 2$ but $e(w) = 0$. Surprisingly, the difference between $E(w)$ and $e(w)$
can be arbitrarily large, as is
shown in Proposition \ref{bigXS}.\\

The main theorem in this paper is

\begin{thm} \label{ex0minlength} Suppose that $W$ is a finite Coxeter group. If $X$
is a conjugacy class of $W$, then there exists $w \in X$ of minimal length in $X$ such that
 $e(w) = E(w) = 0$.\end{thm}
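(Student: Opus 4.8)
The plan is to reduce to the cuspidal (elliptic) classes by parabolic induction, and then to exhibit, for each such class, a minimal-length representative carrying a Carter-type involution factorization that is simultaneously length-additive. Since $W$ is a direct product of irreducible finite Coxeter groups and conjugacy classes, the two length functions, and involution factorizations are all multiplicative over the direct factors, it suffices to treat $W$ irreducible, and I would argue by induction on $|S|$. By the Geck--Pfeiffer theory of minimal-length elements in conjugacy classes, the class $X$ corresponds to a cuspidal class $C$ of some standard parabolic $W_J$, and the minimal-length elements of $X$ are $W$-conjugate to minimal-length elements of $C$; in particular one may choose a minimal-length $w \in X$ lying in $W_J$ and cuspidal (elliptic) in $W_J$. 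Since $\ell_W = \ell_{W_J}$ and $L_W = L_{W_J}$ on $W_J$ (reflection length being the codimension of the fixed space, which is computed intrinsically), if $J \neq S$ the inductive hypothesis applied in the lower-rank group $W_J$ furnishes $x,y \in W_J$ with $w=xy$, $x^2=y^2=1$, and both length functions additive; this is then the required factorization in $W$, with $w$ of minimal length in $X$. Thus we reduce to $J = S$, i.e.\ to the case where $X$ is cuspidal and its minimal-length elements are elliptic.

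For an elliptic element $w$ the reflection-length side is essentially free: one has $V_1(w) = \{0\}$, so $L(w) = \dim V = |S|$, and Carter's decomposition (Lemma \ref{h3h4}) supplies $w = xy$ with $x^2=y^2=1$, $V_{-1}(x)\cap V_{-1}(y)=\{0\}$ and $L(w)=L(x)+L(y)$. Since the moved space of $w=xy$ is contained in $V_{-1}(x)+V_{-1}(y)$ and equals $V$ by ellipticity, in fact $V = V_{-1}(x)\oplus V_{-1}(y)$. Consequently every Carter factorization of an elliptic element already satisfies the reflection-length constraint $L(w)=L(x)+L(y)$ defining $E$, and the only genuine task is to locate a \emph{minimal-length} representative whose factorization also satisfies the ordinary-length condition $\ell(x)+\ell(y)=\ell(w)$.

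The Coxeter class illustrates the mechanism and is immediate. Writing the (tree) Coxeter diagram as a bipartite graph $S = S_1 \sqcup S_2$, set $x = \prod_{s\in S_1}s$ and $y = \prod_{s\in S_2}s$; each is an involution because generators within a colour class commute, and $w=xy$ is a bipartite Coxeter element. Then $\ell(x)=|S_1|$, $\ell(y)=|S_2|$, $\ell(w)=|S|$, so $\ell(x)+\ell(y)=\ell(w)$, while $V_{-1}(x)=\langle \alpha_s : s\in S_1\rangle$ and $V_{-1}(y)=\langle \alpha_s : s\in S_2\rangle$ give $V = V_{-1}(x)\oplus V_{-1}(y)$; as Coxeter elements have minimal length $|S|$ in their class, this yields $e(w)=E(w)=0$ at minimal length. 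For the remaining cuspidal classes I would proceed by type. In the classical types $A_n$, $B_n$ and $D_n$ the elliptic classes correspond to (signed) cycle types occupying all coordinates; a minimal-length representative is assembled from consecutive (signed) cycles, each split as a length-additive product of two reflection-type involutions exactly as in the Coxeter case, the global $x$ and $y$ being the coordinatewise products. The non-crystallographic and exceptional types $I_2(m)$, $H_3$, $H_4$, $F_4$, $G_2$, $E_6$, $E_7$, $E_8$ have only finitely many cuspidal classes with explicitly known minimal-length representatives, so each can be verified by exhibiting a single $w$ together with its involutions $x,y$.

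The main obstacle is precisely the cuspidal case and, within it, the \emph{simultaneous} control of the two length functions. Carter's decomposition hands us the reflection-length bookkeeping, but forcing $\ell(x)+\ell(y)=\ell(w)$ requires choosing the mutually orthogonal root-sets underlying $x$ and $y$ so that their inversion sets are disjoint, equivalently so that a reduced word for $w$ is the concatenation of reduced words for $x$ and $y$; the example $(145)(236)\in \sym(6)$ shows that a poorly chosen elliptic representative may force $E(w)>0$ even while $e(w)=0$, so both conditions must be met by one representative and one factorization. This is exactly what the bipartite construction achieves for the Coxeter class and what must be engineered for each remaining elliptic class, and the second delicate point is to certify that the chosen $w$ is of minimal length in $X$ rather than merely length-additive; here the Geck--Pfeiffer description of $X_{\min}$ and its connectivity under cyclic shifts is the tool of choice.
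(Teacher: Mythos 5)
Your proposal follows essentially the same route as the paper's proof: reduce to irreducible $W$ by multiplicativity over direct factors, reduce to cuspidal classes via the Geck--Pfeiffer minimal-length result (the paper's Lemma \ref{minpara}), observe that for cuspidal elements every involution factorization is automatically reflection-length additive so $e(w)=E(w)$ (the paper's Lemma \ref{Jlemma}), handle the Coxeter class by the bipartite splitting (Lemma \ref{coxel}), build minimal-length representatives of classical-type cuspidal classes cycle by cycle from commuting involutions (Proposition \ref{abdmin}), and verify the exceptional types case by case by computer. The only caveat is a small justification slip: spanning ($V_{-1}(x)+V_{-1}(y)=V$) does not by itself give directness of the sum; one needs the one-line eigenvector argument that $v\in V_{-1}(x)\cap V_{-1}(y)$ implies $w\cdot v=v$, hence $v=0$ by ellipticity, exactly as in Lemma \ref{Jlemma}.
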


We do not know if it is true that for an arbitrary Coxeter group and a strongly real conjugacy class
$X$, there exists $w \in X$ with $w$ of minimal length in $X$ and $e(w) = 0$. However we note
that this does hold for an arbitrary Coxeter group when $X$ is a
strongly real conjugacy class whose elements have finite order.
This may be seen by combining a theorem of Tits (Chap V, Section
4, Ex 2d of \cite{titsref}) with Lemma \ref{minpara} (which holds
in general)
and Theorem \ref{ex0minlength}.\\

This paper is arranged as follows. Our next section gathers together
relevant background material while reviewing much of the standard
notation used for Coxeter groups. Section \ref{minzero}, apart from proving Proposition \ref{bigXS},
focusses on the proof of Theorem \ref{ex0minlength}. Part of the proof
involves checking, with the aid of {\sc
Magma} \cite{magma}, all the cuspidal classes of the exceptional
finite irreducible Coxeter groups. The data resulting from these
calculations is documented in the appendix. At present, when
studying minimal elements in conjugacy classes, this case-by-case
approach is often the best we can do -- see for example Chapter 3 of
Geck and Pfeiffer \cite{gkpf}.\\

\section{Preliminary Results and Notation}\label{back}

From now on we assume that $W$ is a finite Coxeter group and quickly review
standard notation and facts about such groups. So $W$ has a
presentation of the form

\[ W = \langle R \; | \; (rs)^{m_{rs}} = 1, r, s \in R\rangle \]
where $m_{rs} = m_{sr} \in \mathbb{N}$, $m_{rr} = 1$ and $m_{rs}
\geq 2$ for $r, s \in R, r\neq s$.  The rank of $W$ is $|R|$. The length of an element $w$
of $W$, denoted by $\ell(w)$, is defined to be
\[\ell(w) = \left\{ \begin{array}{l} \min\{ l \; | \; w=r_{1}\cdots r_{l}, r_{i} \in R\} {\mbox{ if $w \neq 1$}} \\
0 {\mbox{ if $w=1$}.} \end{array}\right.\]

Let $V$  be a real euclidean vector space with basis $\Pi = \{ \alpha_r \;
| \; r \in R\}$ and norm  $|| \; ||$. For $r,s \in R$

\[\ip{\alpha_r}{\alpha_s} =  -||\alpha_r|| \; ||\alpha_s|| \; \cos\left(\frac{\pi}{m_{rs}}\right)
\]

defines a symmetric bilinear form $\ip{ \; }{ \; }$
on $V$. Letting $r, s \in R$ we define
 $$r\cdot\alpha_s = \alpha_s - \frac{2\ip{\alpha_r}{\alpha_s}}{\langle\alpha_r, \alpha_r\rangle}\alpha_r.$$
This then extends to an action of $W$ on $V$ which is both
faithful and respects the bilinear form $\ip{ \; }{ \; }$ (see
\cite{humphreys}). The elements of $R$ act as reflections upon $V$
and $V$ is referred to as a reflection module for $W$. The subset
$\Phi = \{w\cdot\alpha_r \; | \; r \in R, w \in W\}$ of $V$ is the
root system of $W$, and $\Phi^+ = \{\sum_{r\in R}\lambda_r\alpha_r
\in \Phi \; | \; \lambda_r \geq 0 {\mbox{ for all $r$}}\}$ and
$\Phi^- = -\Phi^+$ are, respectively, the positive and negative
roots of $\Phi$. For $w \in W$, let $N(w) = \{ \alpha \in \Phi^+
\; | \; w\cdot\alpha \in \Phi^- \}$ -- it is an important fact
that $\ell(w) = |N(w)|$. In a similar vein we have the following well-known result.

\begin{lemma} \label{lengthadd} Let $g, h \in W$. Then
$$N(gh) = N(h)\setminus [-h^{-1}N(g)] \cup h^{-1}[N(g)\setminus
N(h^{-1})].$$ %
Hence $\ell(gh) = \ell(g) + \ell(h) -2\mid N(g)\cap
N(h^{-1})\mid$.\end{lemma}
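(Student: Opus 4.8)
The plan is to compute $N(gh)$ directly from the definition by partitioning $\Phi^+$ according to the action of $h$, and then read off the length identity by counting. Fix a positive root $\alpha$. The condition $\alpha \in N(gh)$ is that $gh\cdot\alpha \in \Phi^-$, and the crucial observation is that whether $gh\cdot\alpha$ is negative is governed by two successive sign changes: first whether $h\cdot\alpha$ is positive or negative, and then whether $g$ sends the resulting root to a negative one. I would also use repeatedly the defining property that for $\beta \in \Phi^+$ one has $h^{-1}\cdot\beta \in \Phi^-$ if and only if $\beta \in N(h^{-1})$.

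First I would split into two cases. If $\alpha \in \Phi^+ \setminus N(h)$, then $h\cdot\alpha \in \Phi^+$, so $gh\cdot\alpha \in \Phi^-$ precisely when $h\cdot\alpha \in N(g)$; thus the contribution to $N(gh)$ from this case is $\{\alpha \in \Phi^+ : h\cdot\alpha \in N(g)\}$. Since $N(g) \subseteq \Phi^+$, any such $\alpha$ automatically lies outside $N(h)$, and using the characterization of $N(h^{-1})$ this set is exactly $h^{-1}[N(g)\setminus N(h^{-1})]$. If instead $\alpha \in N(h)$, then $h\cdot\alpha \in \Phi^-$, so $gh\cdot\alpha = -g\cdot(-h\cdot\alpha)$ is negative precisely when $-h\cdot\alpha \notin N(g)$; hence the contribution here is $N(h)\setminus\{\alpha \in N(h) : -h\cdot\alpha \in N(g)\}$, which I would identify with $N(h)\setminus[-h^{-1}N(g)]$ after checking that every element of $-h^{-1}N(g)$ lying in $\Phi^+$ already lies in $N(h)$. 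Combining the two cases yields the displayed formula for $N(gh)$.

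For the length statement, I would first note that the two pieces of the formula are disjoint: the first term $N(h)\setminus[-h^{-1}N(g)]$ lies inside $N(h)$, while the second term $h^{-1}[N(g)\setminus N(h^{-1})]$ consists of roots $\alpha$ with $h\cdot\alpha \in \Phi^+$, hence outside $N(h)$. Counting the second piece gives $|N(g)\setminus N(h^{-1})| = \ell(g) - |N(g)\cap N(h^{-1})|$, since $h^{-1}$ acts bijectively on roots. For the first piece, the key step is the map $\alpha \mapsto -h\cdot\alpha$, which I would show restricts to a bijection from $N(h)\cap[-h^{-1}N(g)]$ onto $N(g)\cap N(h^{-1})$; this gives $|N(h)\setminus[-h^{-1}N(g)]| = \ell(h) - |N(g)\cap N(h^{-1})|$. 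Adding the two counts produces $\ell(gh) = \ell(g) + \ell(h) - 2|N(g)\cap N(h^{-1})|$.

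The routine work is straightforward, but the sign bookkeeping is the one place to be careful: the main obstacle is tracking positivity correctly through the two cases, in particular verifying that the set $\{\alpha \in N(h): -h\cdot\alpha \in N(g)\}$ coincides with $N(h)\cap[-h^{-1}N(g)]$, and that the map $\alpha \mapsto -h\cdot\alpha$ lands precisely in $N(g)\cap N(h^{-1})$ rather than some larger set. Once the positivity of $-h\cdot\alpha$ for $\alpha \in N(h)$ and the characterization of $N(h^{-1})$ are used consistently, everything else is elementary counting.
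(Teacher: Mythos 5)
Your proposal is correct, and it is worth noting that the paper does not actually prove this lemma at all: its ``proof'' is a one-line citation to Lemma 2.2 of the authors' earlier paper \emph{Involution Products in Coxeter Groups}. What you have written is the standard self-contained argument that such a citation points to: partition $\Phi^+$ into $N(h)$ and its complement, track the sign of $gh\cdot\alpha$ through the two successive actions, identify the two contributions with $N(h)\setminus[-h^{-1}N(g)]$ and $h^{-1}[N(g)\setminus N(h^{-1})]$ respectively, and then count, using the bijection $\alpha \mapsto -h\cdot\alpha$ between $N(h)\cap[-h^{-1}N(g)]$ and $N(g)\cap N(h^{-1})$. All the delicate points are handled: you verify that elements of $h^{-1}[N(g)\setminus N(h^{-1})]$ are positive roots lying outside $N(h)$ (giving disjointness of the two pieces), that positive roots in $-h^{-1}N(g)$ automatically lie in $N(h)$, and that the sign-reversal map lands exactly in $N(g)\cap N(h^{-1})$, which is what makes the coefficient of $|N(g)\cap N(h^{-1})|$ equal to $2$ in the length formula. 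So your argument is a complete replacement for the external reference rather than a variant of anything in this paper; the only thing the citation buys the authors is brevity.
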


\begin{proof} See, for example, Lemma 2.2 in \cite{invprod}.
\end{proof}\\

For $J$ a subset of $R$ define $W_J$ to be the subgroup generated
by $J$. Such a subgroup of $W$ is referred to as a standard
parabolic subgroup. Standard parabolic subgroups are Coxeter
groups in their own right with root system
\[\Phi_J = \{w\cdot\alpha_{r} \; | \; r \in J, w \in W_J\}\]
(see Section 5.5 of \cite{humphreys} for more on this). A
conjugate of a standard parabolic subgroup is called a parabolic
subgroup of $W$. Finally, a {\sl cuspidal} element of $W$ is an
element which is not contained in any proper parabolic subgroup of
$W$. Equivalently, an element is cuspidal if its $W$-conjugacy
class has empty intersection with all the proper standard
parabolic subgroups of $W$.

\begin{thm} \label{tits}  Let $0 \neq v \in V$.
Then the stabilizer of $v$ in $W$ is a proper parabolic subgroup of
$W$.
\end{thm}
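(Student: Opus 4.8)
The plan is to reduce to the case where $v$ lies in the closed fundamental chamber and then identify the stabiliser with a standard parabolic subgroup. Recall the fundamental chamber $C = \{u \in V \mid \ip{u}{\alpha_r} > 0 \text{ for all } r \in R\}$ and its closure $\overline{C} = \{u \in V \mid \ip{u}{\alpha_r} \geq 0 \text{ for all } r \in R\}$. Since $W$ is finite the form $\ip{\;}{\;}$ is positive definite, so $C$ is nonempty; fixing $\delta \in C$ and choosing $v'$ in the (finite) orbit $Wv$ that maximises $\ip{v'}{\delta}$, the reflection formula $r\cdot\delta = \delta - \frac{2\ip{\alpha_r}{\delta}}{\ip{\alpha_r}{\alpha_r}}\alpha_r$ together with the fact that each $r$ is an isometry forces $\ip{v'}{\alpha_r} \geq 0$ for every $r$, i.e. $v' = w\cdot v \in \overline{C}$ for some $w \in W$. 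As $\stab_W(v) = w^{-1}\stab_W(v')\,w$ and every conjugate of a standard parabolic is by definition parabolic, it suffices to treat $v \in \overline{C}$ and to show that $\stab_W(v)$ is a \emph{standard} parabolic subgroup.

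So I would assume $v \in \overline{C}$ and set $J = \{s \in R \mid \ip{v}{\alpha_s} = 0\}$; since $s$ fixes $v$ precisely when $\ip{v}{\alpha_s}=0$, this is the set of simple reflections fixing $v$, and clearly $W_J \leq \stab_W(v)$. The crux is the reverse inclusion, which I would establish by induction on $\ell(w)$ for $w \in \stab_W(v)$. If $w \neq 1$, a reduced expression supplies $s \in R$ with $\ell(ws) < \ell(w)$, equivalently $w\cdot\alpha_s \in \Phi^-$. Using $W$-invariance of the form and $w\cdot v = v$ gives $\ip{v}{w\cdot\alpha_s} = \ip{w^{-1}\cdot v}{\alpha_s} = \ip{v}{\alpha_s} \geq 0$; but $w\cdot\alpha_s$ is a negative root, and $v \in \overline{C}$ pairs nonpositively with every element of $\Phi^-$, so $\ip{v}{w\cdot\alpha_s} \leq 0$. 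Hence $\ip{v}{\alpha_s} = 0$, so $s \in J$, and $ws$ again fixes $v$ with $\ell(ws) < \ell(w)$. By induction $ws \in W_J$, whence $w = (ws)s \in W_J$. Thus $\stab_W(v) = W_J$, a standard parabolic subgroup.

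Properness then follows from $v \neq 0$: since $\{\alpha_r \mid r \in R\}$ is a basis of $V$ and the form is nondegenerate, $\ip{v}{\alpha_r} \neq 0$ for at least one $r$, so $J \neq R$ and $W_J \neq W$. Pulling this back through the conjugation above, $\stab_W(v)$ is a proper parabolic subgroup, as required.

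The step I expect to be the main obstacle is the sign argument inside the induction, namely extracting $\ip{v}{\alpha_s} = 0$ from $w\cdot\alpha_s \in \Phi^-$. It rests on two standard facts that I would want cleanly in place: that $\ell(ws) < \ell(w)$ is equivalent to $w\cdot\alpha_s \in \Phi^-$, and that points of $\overline{C}$ pair nonpositively with negative roots (immediate, since every $\beta \in \Phi^-$ is a nonpositive combination of the $\alpha_r$). Everything else — the fundamental-domain reduction and the properness check — is routine given positive-definiteness of the form.
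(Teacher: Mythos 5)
Your proof is correct. The paper does not actually argue this theorem in-text --- its ``proof'' is only the citation to Bourbaki (Ch.~V, \S 3, Proposition 2) --- and your argument (conjugating $v$ into the closed fundamental chamber via a maximality argument, identifying the stabiliser there with the standard parabolic $W_J$ for $J = \{s \in R \mid \ip{v}{\alpha_s} = 0\}$ by induction on length, and deducing properness from $v \neq 0$ and nondegeneracy of the form) is precisely the standard fundamental-domain proof that the citation points to, so the two approaches coincide in substance.
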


\begin{proof}Consult Ch V \S 3 Proposition 2 of \cite{titsref}.
\end{proof}

\begin{lemma}\label{minpara} Suppose that $J \subseteq R$ and $w \in W_J$. Then

$$\min \{\ell(h^{-1}wh) \; | \; h \in W_J \} = \min \{\ell(g^{-1}wg) \; | \; g \in
W \}.$$
\end{lemma}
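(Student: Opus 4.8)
The plan is to prove the two inequalities separately. Since $W_J \subseteq W$, the minimum on the left, being taken over a smaller set, is automatically at least the minimum on the right; the whole content of the lemma is therefore the reverse inequality, namely that no element of $W$ lying outside $W_J$ can conjugate $w$ to something strictly shorter than the best one can achieve inside $W_J$. I would establish this by replacing an arbitrary conjugating element with a minimal length coset representative and then controlling lengths through the inversion sets $N(\cdot)$ and the identity $\ell(\cdot) = |N(\cdot)|$.

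The first step is a coset decomposition. Every $g \in W$ can be written uniquely as $g = ub$ with $u \in W_J$ and $b$ the minimal length element of the right coset $W_J g$; recall that $b$ is characterised by $\ell(tb) > \ell(b)$ for all $t \in J$, equivalently (by the standard length/root dictionary) by $b^{-1}\cdot\alpha_t \in \Phi^+$ for all $t \in J$. Setting $w' = u^{-1}wu$, which lies in $W_J$ and is a $W_J$-conjugate of $w$, we obtain $g^{-1}wg = b^{-1}w'b$. Thus every $W$-conjugate of $w$ has the form $b^{-1}w'b$ for some such representative $b$ and some $W_J$-conjugate $w'$ of $w$, and it suffices to bound $\ell(b^{-1}w'b)$ below by $\ell(w')$, since $\ell(w')$ is in turn at least the left-hand minimum.

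The key step is therefore the inequality $\ell(b^{-1}w'b) \ge \ell(w')$ for $w' \in W_J$ and $b$ a minimal representative as above, and here I would argue entirely with roots. The defining condition on $b$ forces $b^{-1}$ to carry $\Phi_J^+ = \Phi_J \cap \Phi^+$ into $\Phi^+$ (a positive-cone argument extends $b^{-1}\cdot\alpha_t \in \Phi^+$ from the simple roots $\alpha_t$, $t \in J$, to all of $\Phi_J^+$), and hence $\Phi_J^-$ into $\Phi^-$. Now $N(w') \subseteq \Phi_J^+$ because $w' \in W_J$, and $w'$ sends each such root into $\Phi_J^-$. I would then verify that $\alpha \mapsto b^{-1}\cdot\alpha$ maps $N(w')$ injectively into $N(b^{-1}w'b)$: if $\alpha \in N(w')$ then $b^{-1}\cdot\alpha \in \Phi^+$, while $(b^{-1}w'b)\cdot(b^{-1}\cdot\alpha) = b^{-1}\cdot(w'\cdot\alpha) \in b^{-1}\cdot\Phi_J^- \subseteq \Phi^-$. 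Counting then gives $\ell(w') = |N(w')| \le |N(b^{-1}w'b)| = \ell(b^{-1}w'b)$, and combining this with the reduction above completes the argument.

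The main obstacle, and the reason the statement is not purely formal, is exactly that conjugating an element $w' \in W_J$ by $b \notin W_J$ in general leaves the parabolic $W_J$, so one cannot simply work inside $W_J$. The positivity property of minimal coset representatives is what rescues the situation: it guarantees that the inversions of $w'$ persist, without cancellation, as inversions of $b^{-1}w'b$. I would expect the only delicate bookkeeping to be the check that the root map genuinely lands in $N(b^{-1}w'b)$ and is injective; once the behaviour of $b^{-1}$ on $\Phi_J$ is pinned down this is routine, and no appeal to the classification of finite Coxeter groups is required, which is consistent with the remark that the lemma holds in general.
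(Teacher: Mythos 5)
Your proof is correct, and it is genuinely different from what the paper does: the paper offers no argument at all for this lemma, merely citing Lemma 3.1.14 of Geck and Pfeiffer, so your proposal supplies a self-contained proof where the paper delegates one. Your route --- write $g = ub$ with $u \in W_J$ and $b$ the minimal length element of the right coset $W_J g$, use the characterisation $b^{-1}\cdot\alpha_t \in \Phi^+$ for all $t \in J$ to get $b^{-1}\cdot\Phi_J^+ \subseteq \Phi^+$ by the positive-cone argument, recall that $N(w') \subseteq \Phi_J^+$ for $w' \in W_J$, and then check that $\alpha \mapsto b^{-1}\cdot\alpha$ injects $N(w')$ into $N(b^{-1}w'b)$ --- is complete and sound: the one point you flag as delicate does go through, because $W_J$ stabilises $\Phi_J$, so for $\alpha \in N(w')$ one has $w'\cdot\alpha \in \Phi_J^-$ and hence $(b^{-1}w'b)\cdot(b^{-1}\cdot\alpha) = b^{-1}\cdot(w'\cdot\alpha) \in \Phi^-$, while injectivity is free since $b^{-1}$ acts bijectively on $\Phi$; counting inversions then gives $\ell(b^{-1}w'b) \geq \ell(w')$, and the lemma follows by combining this with the trivial inequality. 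Your approach buys two things the bare citation does not: it makes explicit the sharper intermediate fact that conjugation by a distinguished coset representative never decreases length, and it visibly uses no finiteness hypothesis on $W$, which corroborates the paper's parenthetical remark in the introduction that this lemma ``holds in general'' for arbitrary Coxeter groups.
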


\begin{proof} See Lemma 3.1.14 of \cite{gkpf}.
\end{proof}

\begin{lemma} \label{h3h4} Suppose $w \in W$.
Then there exist $x, y \in W$ with $w=xy$, $x^2 = y^2 = 1$
and $V_{-1}(x) \cap V_{-1}(y)=
\{0\}.$
\end{lemma}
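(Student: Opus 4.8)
The plan is to reduce to the irreducible case and then dispose of the three families of irreducible finite Coxeter groups -- the Weyl groups, the dihedral groups, and the exceptional groups $H_3$ and $H_4$ -- by separate means, with $H_3$ and $H_4$ being the real obstacle. First I would record the underlying geometry. Viewing $w$ as an orthogonal transformation of $V$, the orthogonal normal form splits $V$ into the fixed space $V_1(w)$, the space $V_{-1}(w)$, and two-dimensional rotation planes $P_1,\dots,P_k$ on which $w$ acts by a nontrivial rotation. On each $P_i$ a rotation factors as a product of two reflections of $P_i$ with distinct mirror lines, while on $V_{-1}(w)$ we may write $w = (-1)\cdot 1$; assembling these pieces produces involutions $x,y$ of $O(V)$ with $w=xy$ and $V_{-1}(x)\cap V_{-1}(y)=\{0\}$, the contributing lines lying in mutually orthogonal summands and being distinct within each plane. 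Thus the substance of the lemma is not the existence of such $x,y$ in $O(V)$, but that they can be chosen inside $W$.

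Next I would carry out the reductions. Writing $W = W_1\times\cdots\times W_t$ with the $W_i$ irreducible and acting on pairwise orthogonal subspaces of $V$, any $w$ has components $w_i\in W_i$, and decompositions $w_i = x_i y_i$ satisfying the condition in each $W_i$ reassemble to $x,y$ with $V_{-1}(x)=\bigoplus_i V_{-1}(x_i)$, so that the intersection stays trivial; hence we may assume $W$ irreducible. I would then observe that the condition is conjugation-invariant, since $V_{-1}(gxg^{-1}) = g\,V_{-1}(x)$, so within $W$ it suffices to treat one representative per conjugacy class. Finally, if $w$ lies in a proper parabolic we may conjugate it into a standard parabolic $W_J$ and induct on the rank: an involution of $W_J$ fixes $\mathrm{span}(\Phi_J)^\perp$ pointwise, so its $(-1)$-eigenspace computed in $V$ lies in $\mathrm{span}(\Phi_J)$ and coincides with the one computed in the reflection module of $W_J$. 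Therefore it is enough to decompose the cuspidal elements of each irreducible $W$ (and involutions, which are always handled trivially by $w = w\cdot 1$).

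With these reductions the first two families are quick. For the crystallographic types $A_n$, $B_n$, $D_n$, $E_6$, $E_7$, $E_8$, $F_4$, $G_2$ the required statement -- that every element is a product of two involutions $x,y\in W$ with $V_{-1}(x)\cap V_{-1}(y)=\{0\}$ -- is precisely Carter's result in \cite{carter}, which I would cite directly. For a dihedral group $I_2(m)$ the cuspidal elements are exactly the nontrivial rotations, and each such rotation is a product of two \emph{distinct} reflections of $I_2(m)$; their mirrors are distinct lines in the plane, so the two one-dimensional $(-1)$-eigenspaces meet only in $0$.

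The main obstacle is $H_3$ and $H_4$, where I do not expect a uniform conceptual argument. Here I would verify the lemma directly on a set of representatives of the cuspidal classes, exhibiting in each case involutions $x,y\in W$ with $w=xy$ and checking $V_{-1}(x)\cap V_{-1}(y)=\{0\}$; by conjugation-invariance this settles each entire class. For $H_3$ the few cuspidal classes can be treated by hand, whereas for $H_4$, with its many classes, the verification is best done computationally in {\sc Magma} \cite{magma}. The one point demanding care throughout is the eigenspace bookkeeping under the reductions, namely confirming that triviality of the intersection in a direct summand or in a parabolic does transfer to triviality in the ambient module $V$.
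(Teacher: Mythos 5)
Your proof is correct, and its skeleton --- induction on rank, conjugation-invariance of the condition, reduction into standard parabolic subgroups, Carter for the Weyl groups, a direct argument for the dihedral groups, and a machine check for $H_3$ and $H_4$ --- matches the paper's. The genuine difference is the mechanism that secures the eigenspace condition. The paper starts from an \emph{arbitrary} decomposition $w = xy$ into involutions (so the only input needed for each irreducible type is strong reality) and then invokes Tits' theorem (Theorem \ref{tits}): any nonzero $v \in V_{-1}(x)\cap V_{-1}(y)$ satisfies $w\cdot v = v$, so the stabilizer of $v$ is a proper parabolic subgroup containing $w$, and induction on rank takes over. You instead split, tautologically, into elements lying in a proper parabolic versus cuspidal elements, and then must verify the \emph{full} statement, eigenspace condition included, on cuspidal representatives: for the Weyl groups this forces you to cite Carter's theorem in its strong form (which is valid, and is exactly what the paper quotes in its introduction), and for $H_3$ and $H_4$ your {\sc Magma} computation must both produce decompositions and check eigenspaces, whereas the paper's computation only needs to confirm that every element is a product of two involutions. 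The two routes are logically interchangeable, and yours could be lightened by the paper's observation in contrapositive form: a cuspidal element $w$ has $V_1(w)=\{0\}$ by Tits' theorem, and since $V_{-1}(x)\cap V_{-1}(y)\subseteq V_1(w)$ for \emph{any} involution decomposition $w=xy$, the eigenspace condition is automatic for cuspidal elements, so strong reality alone suffices there. In short, your approach buys independence from Tits' theorem at the price of stronger inputs (full Carter, heavier computation); the paper's buys minimal inputs at the price of invoking it. One merit of your write-up is that it makes explicit two points the paper glosses over: the component-wise treatment of reducible $W$, and the fact that $(-1)$-eigenspaces of involutions of $W_J$ computed in $V$ agree with those computed in the reflection module of $W_J$, since $W_J$ fixes $\mathrm{span}(\Phi_J)^{\perp}$ pointwise.
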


\begin{proof} The proof is by induction on the rank of $W$, the
rank 1 case being trivial. Let $x, y \in W$ be such that $w = xy$
with $x^2 = y^2 = 1$. (For $W$ a Weyl group, this is possible by
\cite{carter}. The case when $W$ is a dihedral group is
straightforward to verify while types $H_3$ or $H_4$ may be
checked using {\sc
Magma}  \cite{magma}.) If $V_{-1}(x) \cap V_{-1}(y) =
\{0\}$, we are done. So suppose $0 \ne v \in V_{-1}(x) \cap
V_{-1}(y)$. By Theorem \ref{tits}, $w$ is contained in a proper
parabolic subgroup of $W$. Hence $w$ is conjugate to an element
$u$ of some proper standard parabolic subgroup $W_J$ of $W$. By
induction $u =ab$ for some $a, b \in W_J$ where $a^2 = b^2 = 1$
and $V_{-1}(a) \cap V_{-1}(b) = \{0\}$. The appropriate conjugates
of $a$ and $b$ will have the same properties with respect to $w$,
so proving the lemma.
\end{proof}

\begin{lemma}\label{coxelts} Suppose that $R = \{r_1, r_2, \ldots, r_n\}$ and set $w = r_1r_2 \cdots r_n$. Let $X$ denote the
$W$-conjugacy class of $w$. Then
\begin{trivlist}
\item (i) the minimal length in $X$ is $n$; and
\item (ii) the product of $r_1, \dots ,r_n$ in any order is an element
of $X$.
\end{trivlist}
\end{lemma}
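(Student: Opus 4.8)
The plan is to handle the two parts together, deriving the length statement (i) from the conjugacy statement (ii) via a reflection-length bound. The easy half of (i) is the upper bound: I would first record that the given word for $w$ is reduced, so that $\ell(w) = n$ and the minimal length in $X$ is at most $n$. To see that $r_1 \cdots r_n$ is reduced, I would use the standard criterion that a word $s_1 \cdots s_k$ in the generators is reduced exactly when $s_1 \cdots s_{i-1}\cdot \alpha_{s_i} \in \Phi^+$ for every $i$. Here the root $\gamma = r_{j+1} \cdots r_{i-1}\cdot \alpha_{r_i}$ always has coefficient $1$ on $\alpha_{r_i}$, because each $r_m$ with $m \neq i$ alters only the coefficient of $\alpha_{r_m}$; hence $\gamma \neq \alpha_{r_j}$ for $j<i$, and since a simple reflection sends every positive root other than its own simple root to a positive root, applying $r_j$ keeps the root positive. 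Induction then gives $r_1 \cdots r_{i-1}\cdot\alpha_{r_i} \in \Phi^+$ for all $i$, so the word is reduced.

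For part (ii) I would first reduce to the case that the Coxeter diagram of $W$ is connected, since an ordering of $R$ induces an ordering on the generators of each irreducible component and conjugacy can be assembled componentwise. There are two elementary moves on an ordering that preserve the conjugacy class of the associated product: (a) interchanging two adjacent generators of the word that commute, which does not change the element at all; and (b) the cyclic rotation $r_{i_1}r_{i_2}\cdots r_{i_n} \mapsto r_{i_2}\cdots r_{i_n}r_{i_1}$, obtained by conjugating by $r_{i_1}$. Encoding an ordering as the acyclic orientation of the diagram in which each edge points from the earlier to the later endpoint, move (a) leaves the orientation unchanged, while move (b), applied when $r_{i_1}$ is a source, turns that source into a sink by reversing all edges incident to it. The assertion (ii) is therefore equivalent to the statement that any two acyclic orientations of the diagram are connected by a sequence of such source-to-sink reversals. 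I expect this connectivity statement to be the main obstacle; the point that makes it work is that a finite Coxeter diagram is a forest, and I would prove the claim for a tree by induction on the number of vertices, removing a leaf and lifting the reversals of the smaller tree, the bookkeeping at the neighbour of the leaf being the delicate step.

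Granting (ii), I would finish (i) by bounding length below by the reflection length $L$. Any reduced expression exhibits an element as a product of that many reflections, so $\ell(u) \geq L(u)$ for all $u$, and $L$ is constant on $X$ since conjugation carries reflections to reflections. To evaluate $L$ on $X$ I would use the bipartite Coxeter element: as the diagram is a forest it is bipartite, so $R = R_1 \sqcup R_2$ with the generators in each $R_i$ pairwise commuting, and $c_0 = w_1 w_2$, where $w_i = \prod_{r \in R_i} r$, is by (ii) an element of $X$. If $c_0 v = v$ then $w_1 v = w_2 v =: u$, so $w_1(v+u) = w_2(v+u) = v+u$, whence $v+u$ is fixed by every generator and hence by $W$; as the form is positive definite (because $W$ is finite) and $\Pi$ is a basis of $V$, this forces $v+u = 0$, so $v \in V_{-1}(w_1)\cap V_{-1}(w_2)$. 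Commuting reflections have orthogonal roots, so $V_{-1}(w_i)$ is the span of $\{\alpha_r : r \in R_i\}$, and these two spans meet only in $0$ since $\Pi$ is a basis; thus $V^{c_0} = \{0\}$. A product of $k$ reflections fixes the intersection of their $k$ reflecting hyperplanes, so $L(u) \geq \operatorname{codim} V^{u}$; applied to $c_0$ this gives $L(c_0) \geq n$, while $c_0$ is plainly a product of $n$ reflections, so $L \equiv n$ on $X$. Hence every element of $X$ has length at least $n$, and with $\ell(w) = n$ this shows the minimal length in $X$ is exactly $n$, completing both parts.
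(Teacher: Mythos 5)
Your proof is correct, but it takes a genuinely different route from the paper, which does not prove this lemma at all: the paper simply cites Proposition 3.1.6 of Geck and Pfeiffer \cite{gkpf}. You instead assemble a self-contained argument from three standard ingredients: (1) the root-theoretic criterion for reduced words, together with the observation that the coefficient of $\alpha_{r_i}$ is untouched by the other simple reflections, giving $\ell(w)=n$ and the upper bound in (i); (2) the dictionary between orderings of $R$ and acyclic orientations of the Coxeter diagram, under which commutation moves fix the orientation and cyclic shifts are source-to-sink reversals, reducing (ii) to connectivity of the flip graph of a forest; (3) the bound $\ell(u)\ge L(u)\ge\operatorname{codim}V^{u}$ together with the bipartite Coxeter element $c_0=w_1w_2$, whose fixed space you correctly show to be zero, giving $L\equiv n$ on $X$ and hence the lower bound in (i). All of these steps check out. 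The one place where you only sketch is the one you rightly flag as the crux: connectivity of orientations of a tree under source/sink reversals. The leaf induction you outline does go through, and the bookkeeping becomes painless once you note explicitly that sink-to-source reversals are also conjugations and so may be used freely; in particular, a mismatched leaf edge left over at the end of the induction is corrected by a single reversal at the leaf, which is always a source or a sink. You also use, without proof, that the diagram of a finite Coxeter group is a forest; the paper does the same in the proof of Lemma \ref{coxel}, so this is consistent with the ambient conventions. What your route buys is self-containedness and a clear view of exactly where finiteness enters (positive definiteness of the form and the forest property); what the citation buys the paper is brevity, since the Geck--Pfeiffer proposition packages essentially this classical argument.
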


\begin{proof} See Proposition 3.1.6 of \cite{gkpf}.
\end{proof}\\

Note that the minimal length elements of $X$ in Lemma \ref{coxelts} are known as
the {\em Coxeter elements} of $W$. We shall use $\dih(2m)$ to denote the dihedral group of order $2m$. The famous classification of irreducible finite Coxeter groups
obtained by Coxeter \cite{coxeter} (see also \cite{humphreys})
states

\begin{thm}\label{classification} An irreducible finite Coxeter group is either of type
$A_n (n\geq 1)$, $B_n (n \geq 2)$, $D_n (n \geq 4)$, $\dih(2m) (m \geq 5)$, $E_6$, $E_7$, $E_8$, $F_4$, $H_3$
or $H_4$.
\end{thm}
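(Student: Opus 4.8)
The plan is to reduce the classification to a statement about the bilinear form $\ip{\,}{\,}$ and then carry out a combinatorial analysis of the associated diagrams. To each such $W$ I attach its \emph{Coxeter diagram} $\Gamma$: a graph with vertex set $R$ in which distinct $r,s$ are joined by an edge labelled $m_{rs}$ whenever $m_{rs}\geq 3$ (edges labelled $3$ are left unlabelled by convention, and a commuting pair $m_{rs}=2$ contributes no edge). Irreducibility of $W$ corresponds exactly to connectedness of $\Gamma$, since a disconnected diagram expresses $W$ as the direct product of the standard parabolic subgroups carried by its components, and the form decomposes orthogonally. The key classical input, for which I refer to \cite{humphreys}, is that $W$ is finite if and only if the Gram matrix $A=(\ip{\alpha_r}{\alpha_s})$ — equivalently, after normalising $\|\alpha_r\|=1$, the symmetric matrix with diagonal entries $1$ and off-diagonal entries $-\cos(\pi/m_{rs})$ — is positive definite. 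Thus classifying irreducible finite $W$ amounts to classifying connected Coxeter diagrams whose Gram matrix is positive definite.

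Next I would record the crucial heredity properties. A principal submatrix of a positive definite matrix is again positive definite, so if $\Gamma$ is positive definite then so is every subdiagram obtained by deleting vertices; moreover reducing any label $m_{rs}$ makes the corresponding off-diagonal entry less negative and also preserves positive definiteness. Contrapositively, no positive definite diagram can contain, or dominate after reducing labels, a diagram that fails to be positive definite. The role of minimal forbidden configurations is played by the \emph{affine} (Euclidean) diagrams $\tilde A_n,\tilde B_n,\tilde C_n,\tilde D_n,\tilde E_6,\tilde E_7,\tilde E_8,\tilde F_4,\tilde G_2$: a direct determinant computation shows each has Gram matrix of determinant $0$, so each is positive semidefinite but singular, hence not positive definite, while every proper subdiagram of each is positive definite.

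The elimination step then uses these affine diagrams as obstructions. From them one derives a short list of structural constraints on a connected positive definite $\Gamma$: it contains no cycle (else it dominates some $\tilde A_n$) and so is a tree; no vertex has degree $\geq 4$ and there is at most one vertex of degree $3$ (else it contains $\tilde D_n$); the high edge labels are tightly restricted, ruling out the $\tilde C_n,\tilde F_4,\tilde G_2$ patterns; and the branch lengths around a degree-$3$ vertex are bounded, ruling out $\tilde E_6,\tilde E_7,\tilde E_8$. A finite check of the surviving trees yields precisely the diagrams of types $A_n\ (n\geq 1)$, $B_n\ (n\geq 2)$ (the single label-$4$ edge; as Coxeter groups $B_n$ and $C_n$ have the same diagram), $D_n\ (n\geq 4)$, the two-vertex diagram with one edge labelled $m$ — giving $A_2$ when $m=3$, $B_2$ when $m=4$, and $\dih(2m)$ when $m\geq 5$ — together with $E_6,E_7,E_8$, $F_4$, $H_3$ and $H_4$. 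Verifying that each of these diagrams has positive definite Gram matrix (its determinant is positive) confirms finiteness and completes the argument of Coxeter \cite{coxeter}.

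The main obstacle is this third step: the determinant bookkeeping that pins the affine diagrams as the singular boundary cases, and the case analysis bounding branch lengths and high edge labels. These computations are elementary but intricate, and they are exactly where the real content of the classification lies; there is no shortcut past checking that each affine diagram is the degenerate configuration separating the finite diagrams from the infinite ones.
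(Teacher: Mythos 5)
Your overall strategy (Gram matrix positive definiteness, heredity under passing to subdiagrams and reducing labels, elimination by degenerate diagrams, finite check of survivors) is the standard route to this theorem --- note that the paper itself offers no proof at all, only a citation to Coxeter and Humphreys, so you are supplying an argument where the paper defers entirely to the literature. However, your sketch has a genuine gap at its central step: the claim that ``the role of minimal forbidden configurations is played by the affine diagrams $\tilde A_n,\tilde B_n,\tilde C_n,\tilde D_n,\tilde E_6,\tilde E_7,\tilde E_8,\tilde F_4,\tilde G_2$'' is false as stated. Every affine diagram is crystallographic, so none of them carries an edge label $5$; and since your domination argument only runs in one direction (reducing labels preserves positive definiteness, so an obstruction must arise from the given diagram by deleting vertices and \emph{lowering} labels), no diagram containing a $5$-labelled edge can ever be eliminated by the affine list. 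Concretely, the path on $n\geq 5$ vertices with one terminal edge labelled $5$ (the ``$H_n$'' diagram, $n\geq 5$) and the path on $4$ vertices with an \emph{interior} edge labelled $5$ pass every test you impose: they are trees, have no vertex of degree $\geq 3$, contain at most one label $\geq 4$ and no label $\geq 6$, and every subdiagram obtained by lowering the $5$ gives a positive definite diagram of type $A$, $B$ or $F_4$. Yet these diagrams are not positive definite and the corresponding Coxeter groups are infinite. Hence your ``finite check of the surviving trees'' is not available --- infinitely many trees survive your sieve --- and the claimed terminal list (in particular, that only $H_3$ and $H_4$ occur among $5$-labelled diagrams) does not follow from the constraints you derived.

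The repair is exactly the point your last paragraph waves at but does not carry out: the set of minimal obstructions must be enlarged beyond the affine diagrams by the non-crystallographic degenerate ones, namely the $4$-vertex path with labels $(3,5,3)$ and the $5$-vertex path with labels $(5,3,3,3)$ (together with the heredity lemma these force any $5$-label to be terminal on a path of at most $4$ vertices, which with small cases like $(5,3,5)$ and $(5,4)$ reducing to $\tilde C_3$ and $\tilde C_2$ leaves only $\dih(10)$, $H_3$, $H_4$). One must verify by direct computation that these two diagrams fail positive definiteness; this is precisely how the treatments in Bourbaki and in Humphreys close the argument, and without it the classification you assert is not proved.
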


We next discuss concrete descriptions of the Coxeter groups of types $A_n, B_n$ and $D_n$
which will feature in a number of our proofs. We
draw the reader's attention to the fact that when regarding $W$ as a group of
permutations or signed permutations we act on the right, as is customary for
permutations. First, $W(A_n)$ may be viewed as being
$\sym(n+1)$ with the set of fundamental reflections given by $\{(12), (23), \dots , (n \;
n+1) \}$. The elements of $W(B_n)$ can be thought of as signed permutations of $\sym(n)$.
We say a cycle in an element of $W(B_n)$ is of negative sign type if it has an odd number
of minus signs, and positive sign type otherwise. The set of fundamental reflections in
$W(B_n)$ can be taken to be $\{(\overset{+}{1}\overset{+}{2}),
(\overset{+}{2}\overset{+}{3}), \ldots, (\overset{+}{n-1} \; \overset{+}{n}),
(\overset{-}{n})\}$. An element $w$ expressed as a product $g_1g_2\cdots g_k$ of disjoint
signed cycles is {\em positive} if the product of all the sign types of the cycles is
positive, and negative otherwise. The group $W(D_n)$ consists of all positive elements of
$W(B_n)$. The fundamental reflections of $W(D_n)$ can be taken to be the set
$\{(\overset{+}{1}\overset{+}{2}), (\overset{+}{2}\overset{+}{3}), \ldots,
(\overset{+}{n-1} \; \overset{+}{n}), (\overset{-}{n-1}\; \overset{-}{n})\}$. \\ 

Let $\{e_i\}$ be an orthonormal basis for the reflection module $V$. For $\sigma \in W(A_n)$, define $e_i\cdot \sigma = e_{i\sigma}$. For $1 \leq i \leq n$ we may set $\alpha_{(i \; i+1)}$ to be $e_i - e_{i+1}$. The positive roots of $W(A_n)$ are then $e_i - e_j$ for $1 \leq i < j \leq n$.
The positive roots of $B_n$ are of the form $e_i \pm e_j$ for $1 \leq i < j \leq n$ and $e_i$
for $1\leq i \leq n$ and, for example, $(e_1 + e_2)\cdot (\mi 1 \+ 2 \mi 4) = -e_2 + e_4$.
The positive roots of $D_n$ are of the form $e_i \pm e_j$ for $1
\leq i < j \leq n$. Therefore the root system of $D_n$ consists of the long roots of the
root system for $B_n$. Even if $w$ is positive, it may contain negative cycles, which we
wish on occasion to consider separately, so when considering elements of $W(D_n)$ we
often work in the environment of $W(B_n)$ to avoid ending up with non-group elements.  \\

The following result is obtained from Propositions 3.4.6, 3.4.7,
3.4.11, 3.4.12 of \cite{gkpf}. The length of minimal length elements
in conjugacy classes is not given explicitly, but expressions for
elements of minimal length are given in Section 3.4.2 and from these
the length can be easily calculated.

\begin{prop} \label{conjbd} Let $W$ be of type $B_n$ or $D_n$. Then the following
hold.
\begin{trivlist}%
\item{(i)} Conjugacy classes in $W$ are parameterized by signed
cycle type, with one class for each signed cycle type except in the case where all cycles
are of even length and positive, and $W$ is of type $D_n$. In this case there are two
conjugacy classes, which can be
interchanged by the length-preserving graph automorphism.%
\item{(ii)} Cuspidal conjugacy classes in $W$ are those whose
signed cycle type consists only of negative cycles. %
\item{(iii)} Each cuspidal conjugacy class $X$ corresponds to a
non-increasing partition $$\lambda = (\lambda_1, \lambda_2,
\ldots, \lambda_k)$$ of $n$, where each $\lambda_i$ is the length
of a negative cycle of $w \in X$. Let $\mu_i = \sum_{j = i+1}^k
\lambda_i$. The minimal length of an element of $X$ is%
$$ \left\{\begin{array}{ll} \sum_{i=1}^k \left( \lambda_i +
2\mu_i\right) & {\mbox{ if $W = W(B_n)$}}\\
\sum_{i=1}^k \left( \lambda_i -1 + 2\mu_i\right) & {\mbox{ if $W =
W(D_n)$}}\end{array}. \right.$$%
\end{trivlist}\end{prop}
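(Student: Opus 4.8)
The plan is to handle (i) and (ii) by structural arguments and to reserve the real work for the length computation in (iii). For (i), I would invoke the classical description of conjugacy in the signed permutation group $W(B_n) = (\zz/2\zz)^n \rtimes \sym(n)$: conjugation by $\sym(n)$ relabels coordinates and conjugation by a sign change flips signs, and a direct check shows that these operations preserve the signed cycle type $($cycle lengths together with the sign type $\prod_j \eps_j$ of each cycle$)$ while acting transitively on the elements of a given signed cycle type. This yields one class per signed cycle type in $W(B_n)$. Since $W(D_n)$ has index $2$ in $W(B_n)$, a $B_n$-class contained in $W(D_n)$ is either a single $D_n$-class or splits into two, according to whether $C_{W(B_n)}(w) \not\subseteq W(D_n)$ or $C_{W(B_n)}(w) \subseteq W(D_n)$; one checks that the centralizer contains a negative element, forcing no splitting, unless every cycle of $w$ is positive and of even length, which is exactly the stated exceptional case, the two resulting classes being interchanged by the graph automorphism.

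For (ii), I would first reduce cuspidality to an eigenvalue condition. By Theorem \ref{tits} the stabilizer of any nonzero vector is a proper parabolic, so an element with $V_1(w) \neq \{0\}$ fixes a nonzero vector and hence lies in a proper parabolic; conversely a proper parabolic fixes a nonzero subspace pointwise, so a cuspidal element must satisfy $V_1(w) = \{0\}$. Hence $w$ is cuspidal if and only if $V_1(w) = \{0\}$. It then suffices to observe that on the block of coordinates moved by a single signed cycle of length $\ell$ one has $w^{\ell} = \pm I$, the sign being the sign type of the cycle; thus a positive cycle produces an eigenvalue $1$ and a negative cycle does not. Therefore $V_1(w) = \{0\}$ precisely when every cycle of $w$ is negative, as claimed $($for $W(D_n)$ this additionally forces an even number of cycles, so that $w$ is positive$)$.

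For (iii), the plan is to exhibit, for each such partition $\lambda$, an explicit representative $w$ and to compute $\ell(w) = |N(w)|$ directly. I would let $w$ act on consecutive blocks of coordinates arranged in order of non-increasing length, the $i$-th block carrying the negative $\lambda_i$-cycle $(\+{p}\,\+{(p+1)}\cdots\+{(q-1)}\,\mi{q})$ on its coordinates $\{p, \ldots, q\}$. The inverted positive roots then fall into those supported on a single block and those whose support meets two blocks. A short computation shows that each block contributes exactly $\lambda_i$ roots internally $($one short root $e_j$ together with $\lambda_i - 1$ long roots$)$, while between block $i$ and each coordinate $b$ of a later block exactly the two roots $e_q \pm e_b$ through the sign-changing coordinate $q$ are inverted; summing the latter over the $\mu_i$ later coordinates gives $2\mu_i$. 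Hence $\ell(w) = \sum_i (\lambda_i + 2\mu_i)$ in type $B_n$. In type $D_n$ the only difference is that the short roots $e_j$ are absent from the root system, and since exactly one short root per block was being counted, each internal contribution drops by one, yielding $\sum_i (\lambda_i - 1 + 2\mu_i)$.

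The main obstacle is not this bookkeeping but the verification that the representative $w$ above really has minimal length in its class. Here I would rely on the theory of minimal-length $($good$)$ representatives of conjugacy classes developed in \cite{gkpf}: the explicit representatives of Section 3.4.2 have minimal length, and the element $w$ constructed above can either be taken to be such a representative or is related to one by a length-preserving conjugation. Thus the substantive input is the minimality statement imported from \cite{gkpf}, after which the length formula follows from the elementary root count sketched above.
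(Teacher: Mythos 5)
Your proposal is correct, but it does substantially more work than the paper, whose entire proof of this proposition is a citation: the statement is read off from Propositions 3.4.6, 3.4.7, 3.4.11 and 3.4.12 of \cite{gkpf}, with the length formula in (iii) obtained by evaluating $\ell$ on the explicit minimal-length representatives given in Section 3.4.2 of \cite{gkpf}. Where the paper cites, you argue: for (i) the semidirect product structure of $W(B_n)$ and the index-two splitting criterion (whether the $B_n$-centralizer lies in $W(D_n)$); for (ii) the reduction of cuspidality to $V_1(w)=\{0\}$ via Theorem \ref{tits}, followed by the observation that a cycle block satisfies $w^{\ell}=\pm I$ according to its sign type; for (iii) a direct count of $N(w)$ for an explicit block representative. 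Your counts are right: each negative block of length $\lambda_i$ inverts the short root $e_q$ plus the $\lambda_i-1$ long roots $e_a+e_q$ internally, and the $2\mu_i$ roots $e_q\pm e_b$ against coordinates of later blocks, with exactly the $k$ short roots dropping out in type $D_n$. This buys self-containedness and verifiability, and your construction in (iii) in effect parallels what the paper itself does later in the proof of Proposition \ref{abdmin}. The one point where you and the paper coincide is also the only soft spot in your write-up: minimality of the representative is in both cases imported from \cite{gkpf}, and your claim that your $w$ ``can either be taken to be such a representative or is related to one by a length-preserving conjugation'' is asserted rather than checked. The cleanest logical arrangement is the one you almost state: your root count gives the upper bound $\ell_{\min}(X)\le\sum_i(\lambda_i+2\mu_i)$ (respectively $\sum_i(\lambda_i-1+2\mu_i)$), while the matching lower bound is exactly what one extracts from the representatives of Section 3.4.2 of \cite{gkpf}; since the paper takes precisely this on citation, your reliance on it is not a gap relative to the paper's own standard.
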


We close this section with a few pieces of notation. For a subset
$X$ of $W$ we define $\ell_{\min}(X) = \min \{\ell(x) \;| \; x \in
X \}$. The longest element of $W$ will be denoted by $w_0$. When
$W = W(A_{n-1}) \cong \sym(n)$, $W$ acts in the usual way on the
set $\Omega = \{1, 2, \ldots, n\}$. In this case, for $w \in W$,
the {\em support} of $w$, $\mathrm{supp}(w)$, is given by
$\mathrm{supp}(w) = \{ \delta \in \Omega \;|\; \delta w \neq \delta \}$.

\section{Minimal Length and Zero Excess and Reflection Excess } \label{minzero}

We begin with an elementary lemma.

\begin{lemma}\label{basics} For $w \in W$, the following
hold.
\begin{trivlist}
\item{(i)} Both $e(w)$ and $E(w)$ are non-negative and even.%
 \item{(ii)}
If $w$ is an involution or the identity element, then $e(w) = E(w) = 0$.%
\item{(iii)} $\ell(w)$ is the sum of the lengths, $e(w)$ is the sum
of the excesses, and $E(w)$ is the sum of the reflection excesses
of the projections of $w$ into the irreducible direct factors of $W$.\end{trivlist}\end{lemma}

\begin{proof} For (i), suppose $x^2 = y^2 = 1$
and $xy = w$. Then, using Lemma \ref{lengthadd}, $\ell(w) = \ell(x)
+ \ell(y) - 2|N(x) \cap N(y)|$ and hence $\ell(x) + \ell(y) -
\ell(w)$ is even and (i) follows. The remaining parts of the lemma are
straightforward.\end{proof}\\

In the next lemma we encounter the following two subsets of $W$:-
$$\I_w = \{x \in W \; | \; x^2 = 1, w^{x} = w^{-1}\} \; \mbox{and}$$

$$\J_w = \{x \in W \; | \; x^2 = 1, w^{x} = w^{-1}, V_1(w) \subseteq
V_1(x)\}.$$

\begin{lemma}\label{Jlemma}Suppose that $w \in W$.
\begin{trivlist}
\item{(i)} $\J_w$ is the set of $x$ such that $w = xy$ where $x^2 =
y^2 =1$ and $L(w) = L(x) + L(y)$.
\item{(ii)} If $w$ is cuspidal, then $\I_w = \J_w$ and, in
particular, $e(w) = E(w)$.
\end{trivlist}
\end{lemma}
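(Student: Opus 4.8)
My plan is to prove (i) by making explicit the correspondence between factorisations $w=xy$ with $x^2=y^2=1$ and the set $\I_w$, and then converting the reflection-length condition into the eigenspace condition defining $\J_w$. First I would note that if $x^2=1$ and $w=xy$ then $y=xw$, and $y^2=1$ is equivalent to $xwx=w^{-1}$, i.e.\ to $w^x=w^{-1}$; thus $x\in\I_w$ exactly when $w=xy$ with $x,y$ involutions and $y=xw$. Because the quantity $\ell(x)+\ell(y)-\ell(w)$ minimised in both $e(w)$ and $E(w)$ depends only on $x$ (as $y=xw$), part (i) reduces to proving, for $x\in\I_w$ with $y=xw$, the equivalence
$$V_1(w)\subseteq V_1(x)\quad\Longleftrightarrow\quad L(w)=L(x)+L(y).$$
Here I would use the standard fact that $L(g)=\dim V-\dim V_1(g)$ for every $g\in W$ (so $L(x)=\dim V_{-1}(x)$ for an involution), together with the positive-definiteness of the form, which makes $V_{-1}(x)=V_1(x)^{\perp}$.

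The crux is a pair of eigenspace computations. A direct check gives the identity
$$V_{-1}(x)\cap V_{-1}(y)=V_1(w)\cap V_{-1}(x):$$
if $v\in V_{-1}(x)\cap V_{-1}(y)$ then $v\cdot w=(v\cdot x)\cdot y=(-v)\cdot y=v$, while if $v\in V_1(w)\cap V_{-1}(x)$ then $v\cdot y=v\cdot(xw)=(-v)\cdot w=-v$. A similar computation using $xwx=w^{-1}$ shows $x$ preserves $V_1(w)$, so $V_1(w)=(V_1(w)\cap V_1(x))\oplus(V_1(w)\cap V_{-1}(x))$ and hence $V_1(w)\subseteq V_1(x)$ is equivalent to $V_1(w)\cap V_{-1}(x)=\{0\}$, i.e.\ to $V_{-1}(x)\cap V_{-1}(y)=\{0\}$. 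To match this with the length equality I would use $V_1(x)\cap V_1(y)=(V_{-1}(x)+V_{-1}(y))^{\perp}$ to obtain $\dim(V_1(x)\cap V_1(y))=\dim V-L(x)-L(y)+\dim(V_{-1}(x)\cap V_{-1}(y))$; since $V_1(x)\cap V_1(y)\subseteq V_1(w)$, this yields $L(x)+L(y)-L(w)\ge\dim(V_{-1}(x)\cap V_{-1}(y))$, so $L(w)=L(x)+L(y)$ forces $V_{-1}(x)\cap V_{-1}(y)=\{0\}$. For the reverse implication I would observe that $y\in\I_w$ too, apply the identity symmetrically to deduce $V_1(w)\subseteq V_1(x)\cap V_1(y)$ when $V_{-1}(x)\cap V_{-1}(y)=\{0\}$, and then read off $\dim V_1(w)=\dim V-L(x)-L(y)$. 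This completes (i).

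For (ii), the decisive input is Theorem \ref{tits}. If $w$ fixed some nonzero $v\in V$ then $w\in\stab(v)$, a proper parabolic subgroup, contradicting cuspidality; hence $V_1(w)=\{0\}$. The condition $V_1(w)\subseteq V_1(x)$ distinguishing $\J_w$ inside $\I_w$ is then automatic, so $\I_w=\J_w$, and as $e(w)$ and $E(w)$ are minima of the same function over these two (nonempty, since $W=\mathcal{W}$) sets, $e(w)=E(w)$. I expect the main obstacle to lie entirely in part (i), and specifically in establishing $V_{-1}(x)\cap V_{-1}(y)=\{0\}\Leftrightarrow L(w)=L(x)+L(y)$: this needs both the codimension interpretation of reflection length and careful bookkeeping of the eigenspace dimensions; once $V_1(w)=\{0\}$ is in hand, part (ii) is immediate.
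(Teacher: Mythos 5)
Your proof is correct, and its skeleton is the same as the paper's: with $y=xw$, reduce membership of $\J_w$ to the condition $V_{-1}(x)\cap V_{-1}(y)=\{0\}$ by eigenspace computations, then for (ii) use Theorem \ref{tits} to get $V_1(w)=\{0\}$ for cuspidal $w$, so that $\I_w=\J_w$ and the two minima coincide. Indeed, your identity $V_{-1}(x)\cap V_{-1}(y)=V_1(w)\cap V_{-1}(x)$, together with the $x$-invariance of $V_1(w)$, is exactly the content of the paper's two computations (if $v\in V_{-1}(x)\cap V_{-1}(y)$ then $v\in V_1(w)\cap V_{-1}(x)$; if $v\in V_1(w)$ then $x\cdot v-v=y\cdot v-v\in V_{-1}(x)\cap V_{-1}(y)$). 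The genuine difference is how the reflection-length condition enters. The paper treats the equivalence $V_{-1}(x)\cap V_{-1}(y)=\{0\}\Leftrightarrow L(xy)=L(x)+L(y)$ for involutions $x,y$ as known, importing it from Carter as discussed in the introduction, and simply writes ``whence $L(w)=L(x)+L(y)$'' in one direction and invokes the converse in the other. You prove this equivalence from scratch: using $L(g)=\dim V-\dim V_1(g)$, the orthogonality $V_{-1}(x)=V_1(x)^{\perp}$, and the count $\dim\bigl(V_1(x)\cap V_1(y)\bigr)=\dim V-L(x)-L(y)+\dim\bigl(V_{-1}(x)\cap V_{-1}(y)\bigr)$, you get $L(x)+L(y)-L(w)\geq\dim\bigl(V_{-1}(x)\cap V_{-1}(y)\bigr)$ for one direction, and the equality $V_1(w)=V_1(x)\cap V_1(y)$ for the other. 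What this buys is self-containedness: the only external input is the codimension formula for reflection length, rather than the full Carter equivalence. The cost is that the codimension formula is itself still a cited fact (Carter's lemma, valid for all finite Coxeter groups), so the dependence on Carter is reduced but not eliminated; the paper's version is shorter precisely because it black-boxes the whole equivalence.
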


\begin{proof}
Suppose $x \in \J_w$. Set $y = xw$. Clearly $y^2 = 1$. Let $v \in
V_{-1}(x) \cap V_{-1}(y)$. Then $y\cdot v = x\cdot v$ and hence
$w\cdot v = v$. That is, $v \in V_1(w)$. But $V_1(w) \subseteq
V_1(x)$ as $x \in \J_w$. Thus $v \in V_1(x) \cap V_{-1}(x)$ and so
$v=0$. Therefore $V_{-1}(x)
\cap V_{-1}(y) = \{0\}$, whence $L(w) = L(x) + L(y)$.\\
On the other hand, suppose $x^2 = y^2 = 1$, $w=xy$ and $L(w) = L(x)
+ L(y)$. Then $V_{-1}(x) \cap V_{-1}(y) = \{0\}$. Clearly $w^{x} =
w^{-1}$. Suppose $v \in V_1(w)$. Then $v = w\cdot v = xy\cdot v$.
Hence $x\cdot v = y \cdot v$. Therefore $x\cdot v - v = y\cdot v -
v$. But $x\cdot v - v \in V_{-1}(x)$ and $y\cdot v - v \in
V_{-1}(y)$, which forces $x\cdot v - v = y\cdot v - v = 0$. That is,
$v \in V_1(x)$. Therefore $V_1(w) \subseteq V_1(x)$ and $x \in
\J_w$, which establishes (i).\\

For (ii), $w$ being cuspidal implies, by Theorem \ref{tits}, that
$V_1(w) = 0$. So $\I_w = \J_w$, and clearly $e(w) = E(w)$.

\end{proof}

We now give examples of elements in the symmetric group which have
arbitrarily large reflection excess, while having zero excess,
showing that these quantities can differ by arbitrarily large
amounts.

\begin{prop}\label{bigXS} Let $W = W(A_{n - 1}) \cong \sym(n)$ and assume $0 < 4k \leq
n$. Define \begin{eqnarray*} w_1 &=& (1\; 4\; 6\; 8\cdots 4k-4 \; 4k-2
\;\;\; 4k-1),\\ w_2 &=& (2\;\; 4k\;\;\; 4k-3 \; 4k-5 \cdots 7\; 5\;
3)\end{eqnarray*} and $w = w_1w_2$. Then $e(w) = 0$ but $E(w) \geq
4(k-1)^2$.\end{prop}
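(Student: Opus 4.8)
The plan is to establish the two bounds separately. The claim $e(w)=0$ should follow from the general theory: since $w=w_1w_2$ is a product of two cycles with disjoint-ish support, I would exhibit explicitly a decomposition $w=xy$ with $x^2=y^2=1$ and $\ell(x)+\ell(y)=\ell(w)$. The natural candidates are the ``reversal'' involutions associated to each cycle. For a single $k$-cycle $(a_1\,a_2\,\cdots\,a_k)$, one has the standard factorisation into two involutions each of which reverses the order of the points, and for a suitable choice these add lengths without overlap. Since $w_1$ and $w_2$ together cover a set of points in an interleaved pattern, I would need to pick the two reversal involutions $x,y$ so that $N(x)\cap N(y)=\emptyset$, and then Lemma~\ref{lengthadd} gives $\ell(w)=\ell(x)+\ell(y)$, i.e.\ $e(w)=0$. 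By Lemma~\ref{basics}(i) the excess is even and non-negative, so this forces $e(w)=0$ exactly.

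For the lower bound $E(w)\geq 4(k-1)^2$, the key is Lemma~\ref{Jlemma}(i): any decomposition $w=xy$ achieving the reflection excess must have $x\in\J_w$, meaning $x$ is an involution with $w^x=w^{-1}$ and $V_1(w)\subseteq V_1(x)$, equivalently $V_{-1}(x)\cap V_{-1}(y)=\{0\}$ so that $L(w)=L(x)+L(y)$. The first step is therefore to understand the involutions $x$ inverting $w$. Since $w=w_1w_2$ with $w_1,w_2$ cycles of the same length, conjugation by $x$ must either invert each cycle separately or swap the two cycles; in $\sym(n)$ an involution inverting a cycle is forced to be (essentially) a reversal fixing at most one point of that cycle's support, so $x$ has a rather rigid form. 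The plan is to enumerate these possibilities and, for each admissible $x$ in $\J_w$, bound $\ell(x)+\ell(y)$ from below by counting inversions via $\ell=|N(\cdot)|$, i.e.\ by counting pairs $i<j$ with $i\cdot x>j\cdot x$ (and similarly for $y=xw$).

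The heart of the estimate is a careful inversion count. Here the specific interleaved choice of the points in $w_1$ and $w_2$ is crucial: the even-indexed points $4,6,8,\dots$ used by $w_1$ and the odd-indexed points $3,5,7,\dots$ used by $w_2$ are arranged so that any reversal-type involution inverting these cycles is forced to disagree badly with the natural order on $\{1,\dots,n\}$, producing on the order of $(k-1)^2$ inversions from each of $x$ and $y$. I would make this quantitative by writing down, for the generic admissible $x$, the images of the relevant points and counting the inverted pairs directly, showing the total $\ell(x)+\ell(y)-\ell(w)$ is at least $4(k-1)^2$ in every case.

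I expect the main obstacle to be the case analysis and bookkeeping in this last step: one must verify that \emph{every} involution $x\in\J_w$ (not merely the obvious symmetric ones) yields at least $4(k-1)^2$ excess, and ruling out a clever $x$ that cancels many inversions --- for instance one that swaps the two cycles, or that exploits the single fixed points $4k-1$ and the point $2$ at the cycle ends --- is where the interleaving pattern has to be used sharply. Organising the inversion count so that it is uniform across these sub-cases, rather than treating each separately, will be the delicate part; once the combinatorial bound is in place, the conclusion $E(w)\geq 4(k-1)^2$ is immediate from Lemma~\ref{Jlemma}(i).
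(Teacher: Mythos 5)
Your proposal leaves both quantitative halves of the proposition as plans rather than proofs, and the one concrete construction you do suggest for $e(w)=0$ is the one that cannot work. You propose to factor each cycle separately into ``reversal'' involutions and multiply, i.e.\ to take $x=x_1x_2$, $y=y_1y_2$ with $w_i=x_iy_i$ and $\supp(x_i)\cup\supp(y_i)\subseteq\supp(w_i)$. But every involution inverting a single $2k$-cycle inside its own support is one of the $2k$ dihedral reflections, so every per-cycle factorisation is automatically reflection-length additive; moreover, since the entries of $w_1$ appear in increasing order (and those of $w_2$ in decreasing order after the first), each $w_i$ is order-isomorphic to $(1\,2\cdots 2k)^{\pm 1}$ in $\sym(2k)$, whose every involution factorisation has excess at least $2(k-1)^2$ (Proposition 2.7(iv) of \cite{invprod} combined with Lemma \ref{Jlemma}(ii), since a $2k$-cycle is cuspidal in $\sym(2k)$). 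Inversions supported inside $\supp(w_i)$ survive passage to $\sym(n)$ when the factorisation preserves the two supports, so \emph{any} support-preserving pair $(x,y)$ has $\ell(x)+\ell(y)-\ell(w)\geq 4(k-1)^2>0$ for $k\geq 2$. Hence a zero-excess factorisation must \emph{interchange} $\supp(w_1)$ and $\supp(w_2)$; the paper exhibits one explicitly, namely $x=(13)(2\;4k-1)(45)(67)\cdots(4k-4\;4k-3)(4k-2\;4k)$ and $y=(12)(34)\cdots(4k-1\;4k)$, and verifies $N(x)\cap N(y)=\emptyset$ by computing the action of $x$ on the roots in $N(y)$. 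Your proposal never produces such a pair, and its stated candidates are exactly the factorisations that realise $E(w)$, not $e(w)$.

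For the bound $E(w)\geq 4(k-1)^2$ your skeleton (Lemma \ref{Jlemma}, the invert-or-swap dichotomy) agrees with the paper's, but the content is missing and the difficulty is misplaced. The swap case needs no interleaving argument at all: if $\sigma$ and $\tau$ interchange the two supports, then $v=\sum_{i\in\supp(w_1)}e_i-\sum_{j\in\supp(w_2)}e_j$ is a nonzero element of $V_{-1}(\sigma)\cap V_{-1}(\tau)$, contradicting $L(w)=L(\sigma)+L(\tau)$. In the remaining (support-preserving) case the paper does not count inversions for a ``generic admissible $x$'': it splits $\sigma=\sigma_1\sigma_2$, $\tau=\tau_1\tau_2$, notes that inversions of $(\sigma_i,\tau_i)$ inside $\supp(w_i)$ inject into $N(\sigma)\cap N(\tau)$, and imports the known value $E((1\,2\cdots 2k))=2(k-1)^2$ via the order-preserving relabelling, adding the two bounds. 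Your deferred estimate --- that interleaving forces ``on the order of $(k-1)^2$'' inversions --- is precisely the unproven heart, and it is not routine: the zero-excess pair above shows that factorisations of $w$ can have \emph{no} inversions despite the interleaving, so it is support-preservation plus the order-isomorphism with the standard $2k$-cycle, not the interleaving pattern, that forces inversions.
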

\begin{proof} First we observe that $\supp(w_1)\cap \supp(w_2) = \emptyset$. We define two involutions $x, y$ as follows.
\begin{eqnarray*} x&=& (13)(2 \; 4k-1)\, (45)(67)\cdots (4k-6 \; 4k-5)(4k-4 \;
4k-3)\, (4k-2 \; 4k)\\
y&=& (12)(34)\cdots (4k-1 \; 4k)\end{eqnarray*}
A simple check shows $w = xy$. Now $N(y) = \{e_{2i-1} - e_{2i} | 1
\leq i \leq 2k\}$. To find $N(x) \cap N(y)$, note that
\begin{eqnarray*}
(e_{1}-e_{2})\cdot x &=& e_{3} - e_{4k-1} \in \Phi^+\\
(e_{3}-e_{4})\cdot x &=& e_{1} - e_{5} \in \Phi^+\\
(e_{4k-3}-e_{4k-2})\cdot x &=& e_{4k-4} - e_{4k} \in \Phi^+\\
(e_{4k-1}-e_{4k})\cdot x &=& e_{2} - e_{4k-2} \in \Phi^+\\
(e_{2i-1}-e_{2i})\cdot x &=& e_{2i-2} - e_{2i+1} \in \Phi^+ \; (3\leq
i \leq 2k-2).
\end{eqnarray*}
Hence $N(x) \cap N(y) = \emptyset$, which means $e(w) = 0$ by Lemma \ref{lengthadd}.\\
Next we consider $E(w)$. Since $w_1 = (1\; 4\; 6\; 8\cdots 4k-2
\;\;\; 4k-1)$ and $1 < 4 < \cdots < 4k-1$, we can compare $w_1$
seen as an element of $\sym(\supp(w_1))$ with the element $(1\;
2\cdots 2k)$ as an element of $\sym(2k)$. This allows us to use
[Proposition 2.7(iv), \cite{invprod}] and Lemma \ref{Jlemma}(ii)
to deduce that $E(w_1) \geq E((12 \cdots 2k)) = 2(k-1)^2$. This is
because whenever $w_1 = \sigma_1\tau_1$ where $\sigma_1, \tau_1$
are involutions in $\sym(\supp(w_1))$, we have that
$$\left(N(\sigma_1) \cap N(\tau_1)\right)|_{\sym(\supp(w_1))} \subseteq
N(\sigma_1)\cap N(\tau_1).$$ %
Similarly, by comparing $w_2 = (2\;\; 4k\;\;\; 4k-3 \cdots 7\; 5\;
3)$ to $(1\;\; 2k\;\; 2k-1 \cdots 3\; 2) = (1\; 2\cdots 2k)^{-1}$ we
deduce that $E(w_2)
\geq 2(k-1)^2$.\\

Suppose $w = \sigma\tau$, where $\sigma$ and $\tau$ are involutions with $L(\sigma) + L(\tau) = L(w)$. Then $w^{-1} = w^{\sigma} = w_1^{\sigma}w_2^{\sigma}$. So, as $w^{-1} = w_1^{-1}w_2^{-1}$, we have $$w_1^{\sigma}w_2^{\sigma} = w_1^{-1}w_2^{-1}.$$ Since $\supp(w_1)\cap \supp(w_2) = \emptyset$, it follows that either $w_1^{\sigma} = w_1^{-1}$ or $w_1^{\sigma} = w_2^{-1}$. Suppose for the moment that $w_1^{\sigma} = w_2^{-1}$. Then $w_1^{\tau} = w_2^{-1}$. In particular, both $\sigma$ and $\tau$ interchange $\supp(w_1)$ and $\supp(w_2)$. Put $$v = \sum_{i \in \supp(w_1)} e_i - \sum_{j \in \supp(w_2)} e_j.$$
Then $0 \neq v \in V_{-1}(\sigma) \cap V_{-1}(\tau)$, which contradicts the fact that $L(w) = L(\sigma) + L(\tau)$. Therefore we have $w_1^{\sigma} = w_1^{-1}$, and so $w_2^{\sigma} = w_2^{-1}$. Similarly $w_1^{\tau} = w_1^{-1}$ and $w_2^{\tau} = w_2^{-1}$. As a consequence every 2-cycle of $\sigma$ and every 2-cycle of $\tau$ has its support either in $\supp(w_1)$ or $\supp(w_2)$. Therefore there are involutions $\sigma_1, \sigma_2, \tau_1$ and $\tau_2$ such that $\sigma = \sigma_1\sigma_2$, $\tau = \tau_1\tau_2$ where $w_1 =
\sigma_1\tau_1$, $w_2 = \sigma_2\tau_2$, $\supp(\sigma_1) \cup
\supp(\tau_1) \subseteq \supp(w_1)$ and $\supp(\sigma_2) \cup
\supp(\tau_2) \subseteq \supp(w_2)$. Any $\{i, j\} \subseteq
\supp(w_1)$ (and hence $iw_1, jw_1$) will thus be fixed by
$\sigma_2$ and $\tau_2$. So if $e_i - e_j \in N(\sigma_1)\cap
N(\tau_1)$, then $e_i - e_j \in N(\sigma)\cap
N(\tau)$. Hence\\
$$\left(N(\sigma_1) \cap N(\tau_1)\right) \subseteq
N(\sigma)\cap N(\tau)|_{\sym(\supp(w_1))}.$$ Similarly
$$\left(N(\sigma_2) \cap N(\tau_2)\right) \subseteq
N(\sigma)\cap N(\tau)|_{\sym(\supp(w_2))}.$$ 
This implies that $E(w) \geq E(w_1) +
E(w_2) \geq 4(k-1)^2$, which proves the proposition. \end{proof}

\begin{lemma} \label{coxel} Let $W$ be a finite Coxeter group, and
$X$ be the conjugacy class of Coxeter elements of $W$. Then there
exists $w \in X$ of minimal length in $X$ such that $e(w) = E(w) =
0$.\end{lemma}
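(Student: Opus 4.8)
The plan is to reduce to the irreducible case and then exploit the fact that the Coxeter diagram of an irreducible finite Coxeter group is a tree. By Lemma \ref{basics}(iii) both $e$ and $E$ are additive over the irreducible direct factors of $W$, and a Coxeter element of $W$ is a product of Coxeter elements of the factors; so it suffices to produce the required $w$ when $W$ is irreducible. Assume this from now on. Inspecting the classification in Theorem \ref{classification}, one sees that in every case the diagram of $W$ is a tree, hence bipartite. I would therefore $2$-colour its vertices, writing $R = R_1 \sqcup R_2$ where any two generators lying in the same part are non-adjacent, i.e. commute. Setting $x = \prod_{r \in R_1} r$ and $y = \prod_{r \in R_2} r$, the commutativity within each part makes $x$ and $y$ involutions, and $w := xy$ is a product of the elements of $R$ each exactly once, so by Lemma \ref{coxelts}(ii) it is a Coxeter element and hence lies in $X$.

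The next step is the length computation, which follows from orthogonality. If $S$ is a set of pairwise commuting fundamental reflections then the corresponding simple roots are pairwise orthogonal, so $z = \prod_{r \in S} r$ sends each $\alpha_r$ (for $r \in S$) to $-\alpha_r$; thus these $|S|$ positive roots lie in $N(z)$ and $\ell(z) = |S|$. Applying this to $S = R_1$ and $S = R_2$ gives $\ell(x) + \ell(y) = |R_1| + |R_2| = |R| = \ell_{\min}(X)$. Since $w \in X$ we have $\ell(w) \geq \ell_{\min}(X)$, while $\ell(w) = \ell(xy) \leq \ell(x) + \ell(y)$; hence $\ell(w) = \ell(x) + \ell(y)$ and $w$ attains the minimal length in $X$. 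As $e(w) \geq 0$ by Lemma \ref{basics}(i), this already forces $e(w) = 0$.

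Finally, to obtain $E(w) = 0$ I would verify that the very same decomposition is admissible for $E$, i.e. that $L(w) = L(x) + L(y)$. The orthogonality above also shows that $V_{-1}(x)$ is the span of $\{\alpha_r : r \in R_1\}$ and $V_{-1}(y)$ the span of $\{\alpha_r : r \in R_2\}$; since $\Pi$ is a basis of $V$ and $R_1, R_2$ are disjoint, these subspaces meet only in $\{0\}$. By Lemma \ref{Jlemma}(i) this is exactly the condition $L(w) = L(x) + L(y)$, so the pair $(x,y)$ witnesses $E(w) = 0$ as well. The only step that genuinely uses the structure theory, and so the main point to get right, is the bipartiteness claim: this is where Theorem \ref{classification} enters, as it is precisely the observation that each irreducible diagram is a tree; everything else is the orthogonal-root length bookkeeping sketched above.
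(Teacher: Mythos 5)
Your proof is correct and follows essentially the same route as the paper: $2$-colour the Coxeter graph (a forest, hence bipartite), set $x = \prod_{r \in R_1} r$ and $y = \prod_{r \in R_2} r$, and take $w = xy$. You simply make explicit the details the paper leaves implicit --- the length count via orthogonal simple roots and the check that $V_{-1}(x) \cap V_{-1}(y) = \{0\}$ gives $L(w) = L(x) + L(y)$ --- together with a (harmless but unnecessary) preliminary reduction to the irreducible case.
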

\begin{proof} Since the Coxeter graph of every finite Coxeter
group is a forest (that is, a disjoint union of trees), the
Coxeter graph of $W$ is 2-colourable. Therefore we may divide the
fundamental reflections into two sets $R = R_1 \dot\cup R_2$, such
that for $r, s \in R_i$, $rs = sr$. Now let $x = \prod_{r \in R_1}
r$ and $y = \prod_{r \in R_2} r$. Then $w = xy$ is an element of
$X$ of minimal length, and $e(w) = E(w) = 0$.\end{proof}\\

We remark that if $W$ is irreducible of rank at least two, its Coxeter graph can only
be 2-coloured in one way and consequently there are exactly two
Coxeter elements for which $e(w) = E(w) = 0$.\\

Our next proposition looks at cuspidal classes. The cuspidal classes in $W(B_n)$ are Coxeter elements of a reflection subgroup that is a direct product of Coxeter groups of type $B$. Therefore a cuspidal element is conjugate to a product of Coxeter elements of these factors. The construction of the elements $\tau_i$ and $\sigma_i$ in the proof of Proposition \ref{abdmin} may be viewed as parallel to that of $x$ and $y$ in the proof of Lemma \ref{coxel}. (We remark that caution must be exercised, as positive roots in the root system of a reflection subgroup are not necessarily positive roots in the root system $\Phi$ of $W$.)

\begin{prop} \label{abdmin} Let $W$ be irreducible of type $A_{n}$, $B_n$ or
$D_n$ and $X$ be a cuspidal conjugacy class of $W$. Then there exists $w \in X$ such that $e(w) =
E(w) = 0$ and $w$ has minimal length in $X$. \end{prop}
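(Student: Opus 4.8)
The plan is to treat the three types $A_n$, $B_n$, $D_n$ uniformly by exploiting the fact, noted just before the statement, that a cuspidal class is (conjugate to) a product of Coxeter elements of irreducible factors of type $A$ or $B$ living on disjoint supports. By Lemma~\ref{basics}(iii), lengths, excesses and reflection excesses all add across such disjoint factors, so it suffices to exhibit, for each individual negative cycle (the $B$-type building block) and for the single long cycle (the $A_n$ case), an element of minimal length with $e=E=0$, and then multiply these together. In the $B_n$/$D_n$ setting the relevant partition data and minimal lengths are given explicitly by Proposition~\ref{conjbd}(iii), so the target length is known in advance and I only have to write down a good representative and verify its length matches.

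First I would handle $A_n$, where $X$ is the class of the $(n{+}1)$-cycle: this is just the Coxeter class, so Lemma~\ref{coxel} already produces a minimal-length $w=xy$ with $e(w)=E(w)=0$, and I would simply invoke it. Next, for the $B_n$ (and hence the single-negative-cycle) case, I would construct explicit involutions $\sigma$ and $\tau$ playing the role of $x,y$ in Lemma~\ref{coxel}: writing the negative $n$-cycle as a product of the two ``colour classes'' of fundamental reflections of the underlying $B$-factor, I would set $w=\sigma\tau$ and then verify two things. The length condition $\ell(\sigma)+\ell(\tau)=\ell(w)$, equivalently $e(w)=0$, follows from Lemma~\ref{lengthadd} once I check $N(\sigma)\cap N(\tau)=\emptyset$ by computing the action of $\sigma$ on the roots in $N(\tau)$ (a finite, cycle-by-cycle calculation in the signed-root notation $e_i\pm e_j$, $e_i$). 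That $w$ has minimal length in $X$ I would confirm by comparing $\ell(w)$ against the formula in Proposition~\ref{conjbd}(iii). Finally, since $X$ is cuspidal, Lemma~\ref{Jlemma}(ii) gives $e(w)=E(w)$ for free, so $E(w)=0$ is automatic once $e(w)=0$ is established.

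Assembling the pieces: for a general cuspidal class of $B_n$ (respectively $D_n$) with partition $\lambda=(\lambda_1,\dots,\lambda_k)$, I would take a representative supported on disjoint blocks, one negative cycle of each length $\lambda_i$, apply the single-cycle construction to each block to obtain involutions $\sigma_i,\tau_i$ with zero excess on that block, and set $\sigma=\prod_i\sigma_i$, $\tau=\prod_i\tau_i$, $w=\sigma\tau$. Disjointness of supports and Lemma~\ref{basics}(iii) then give $e(w)=\sum_i e(w_i)=0$ and $E(w)=\sum_i E(w_i)=0$, while the length adds to exactly the minimal value of Proposition~\ref{conjbd}(iii). The $D_n$ case is the delicate point: one must arrange the representative and the colouring so that $w$ lies in $W(D_n)$ (is positive) and so that the $D_n$ minimal length, which is smaller than the $B_n$ value, is actually attained; here the warning in the paragraph before the statement applies, namely that positive roots of the reflection subgroup need not be positive in $\Phi$, so the root computations verifying $N(\sigma)\cap N(\tau)=\emptyset$ must be done in $\Phi$ itself rather than in the subsystem.

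The main obstacle I expect is precisely this bookkeeping in $D_n$: ensuring simultaneously that the chosen product of two involutions is a positive (hence $D_n$) element, that its length hits the $D_n$ minimum rather than the $B_n$ one, and that the emptiness of $N(\sigma)\cap N(\tau)$ survives when roots are measured against the genuine positive system of $W$. The $A_n$ and $B_n$ cases should be routine verifications; the real work is in choosing the $D_n$ representative so that all three conditions hold at once.
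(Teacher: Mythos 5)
Your construction is the right one --- it is essentially the paper's: on each negative-cycle block take the two colour classes of the fundamental reflections of the corresponding $B$-factor, multiply them to get involutions $\tau_i,\sigma_i$, and set $w=\prod_i\tau_i\cdot\prod_i\sigma_i$. But your assembly step has a genuine gap. Lemma \ref{basics}(iii) concerns the projections of $w$ onto the \emph{irreducible direct factors} of $W$; the block subgroups of type $B_{\lambda_1}\times\cdots\times B_{\lambda_k}$ sitting inside the irreducible group $W(B_n)$ form a proper reflection subgroup, not a direct factor decomposition, and neither length nor excess is additive across disjoint-support blocks. Concretely, in $W(B_2)$ the element $(\mi{1})(\mi{2})$ has length $4$, not $1+1$; more generally, the minimal length formula of Proposition \ref{conjbd}(iii) contains the cross-block terms $2\mu_i$, so the sum of the intra-block lengths, $\sum_i\lambda_i$ (resp.\ $\sum_i(\lambda_i-1)$ in type $D$), is strictly smaller than $\ell_{\min}(X)$ as soon as $k\geq 2$. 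For the same reason ``zero excess on each block'' does not transfer to $e(w)=0$: the $W$-length of $\tau_i$ is not its block length (its negative $1$-cycle picks up $2\mu_i$ extra inversions involving later blocks), so $e(w)$ cannot be computed block by block. This is exactly the trap flagged in the paper's parenthetical warning about reflection subgroups, and it bites in type $B_n$ just as much as in $D_n$ --- it is not merely ``$D_n$ bookkeeping.''

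The paper closes this gap with a global computation plus a sandwich argument, which you may want to compare with your fallback plan of checking $N(\sigma)\cap N(\tau)=\emptyset$ directly. One computes $\ell(\sigma)=\sum_i\lceil(\lambda_i-1)/2\rceil$ (a product of commuting fundamental reflections of $W$), and $\ell(\tau)=\sum_i\lfloor(\lambda_i-1)/2\rfloor+k+2\sum_i\mu_i$ in type $B$ (without the $k$ in type $D$), by splitting $\tau$ into its $2$-cycles and its negative $1$-cycles and checking that these two parts have disjoint $N$-sets. The total $\ell(\tau)+\ell(\sigma)$ then equals exactly the value $\ell_{\min}(X)$ from Proposition \ref{conjbd}(iii), and since $w=\tau\sigma\in X$,
$$\ell_{\min}(X)\leq\ell(w)\leq\ell(\tau)+\ell(\sigma)=\ell_{\min}(X),$$
forcing $\ell(w)=\ell(\tau)+\ell(\sigma)=\ell_{\min}(X)$. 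This one line yields minimality and $e(w)=0$ simultaneously, with no need to compute $N(\sigma)\cap N(\tau)$ or $\ell(w)$ at all; cuspidality and Lemma \ref{Jlemma}(ii) then give $E(w)=0$, as you correctly observe. Your alternative of verifying $N(\sigma)\cap N(\tau)=\emptyset$ inside $\Phi$ and computing $\ell(w)$ directly could in principle be carried through, but that is precisely the part you leave as ``bookkeeping,'' and it is where all the work lies; the additivity shortcut you propose in its place is not available.
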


\begin{proof} 
Suppose $W = W(A_{n})$. The only cuspidal class is the class of
Coxeter elements, and so by Lemma \ref{coxel} the proposition holds. \\

It remains to deal with the case where $W$ is
type $B_n$ or type $D_n$ and $X$ is cuspidal. Here, by Proposition
\ref{conjbd}, the elements of $X$ contain only negative cycles of
lengths $\lambda_1 \geq  \ldots \geq \lambda_k$, where $\sum
\lambda_i = n$. We will construct an element $w \in X$ of minimal
length. Let $\nu_1 = 0$ and for $2 \leq i \leq k$ let $\nu_i =
\sum_{j=1}^{i-1} \lambda_i$. As in Proposition \ref{conjbd}, for $
1\leq i \leq k$, let $\mu_i = \sum_{j = i+1}^k \lambda_i$. Note
that $n= \mu_i + \lambda_i + \nu_i$. By Proposition \ref{conjbd},
the minimal length of elements of $X$, $l_{\min}(X)$, is given by
$$ l_{\min}(X) = \left\{\begin{array}{ll} \sum_{i=1}^k \left(
\lambda_i +
2\mu_i\right) & {\mbox{ if $W = W(B_n)$;}}\\
\sum_{i=1}^k \left( \lambda_i -1 + 2\mu_i\right) & {\mbox{ if $W = W(D_n)$.}}\end{array}
\right.$$ Now for $1 \leq i \leq k$ define two involutions, $\tau_i$ and $\sigma_i$ as
follows.
\begin{eqnarray*} \tau_i &=& \left\{\begin{array}{ll}
(\overset{+}{\nu_i + 1} \;\; \overset{+}{\nu_i +
2})(\overset{+}{\nu_i + 3} \;\; \overset{+}{\nu_i + 4})\cdots
(\overset{+}{\nu_{i} + \lambda_i - 2} \;\; \overset{+}{\nu_i +
\lambda_i - 1})(\overset{-}{\nu_i + \lambda_i}) & {\mbox{ if
$\lambda_i$ is odd}}\\
(\overset{+}{\nu_i + 2} \;\; \overset{+}{\nu_i +
3})(\overset{+}{\nu_i + 4} \;\; \overset{+}{\nu_i + 5})\cdots
(\overset{+}{\nu_{i} + \lambda_i - 2} \;\; \overset{+}{\nu_i +
\lambda_i - 1})(\overset{-}{\nu_i + \lambda_i}) & {\mbox{ if $\lambda_i$ is even}} \end{array}\right.\\
\sigma_i &=& \left\{\begin{array}{ll} (\overset{+}{\nu_i + 2} \;\;
\overset{+}{\nu_i + 3})(\overset{+}{\nu_i + 4} \;\;
\overset{+}{\nu_i + 5})\cdots (\overset{+}{\nu_{i} + \lambda_i -
1} \;\; \overset{+}{\nu_i +
\lambda_i}) & {\mbox{ if $\lambda_i$ is odd}}\\
(\overset{+}{\nu_i + 1} \;\; \overset{+}{\nu_i +
2})(\overset{+}{\nu_i + 3} \;\; \overset{+}{\nu_i + 4})\cdots
(\overset{+}{\nu_{i} + \lambda_i - 1} \;\; \overset{+}{\nu_i +
\lambda_i}) & {\mbox{ if $\lambda_i$ is even}} \end{array}\right.\\
\end{eqnarray*}
Define $w_i = \tau_i\sigma_i$, $\tau = \tau_1\cdots \tau_k$, $\sigma = \sigma_1 \cdots
\sigma_k$ and $w = \tau\sigma = w_1\cdots w_k$. Note that if $w \in W(D_n)$, then $k$ is
even and hence $\tau$ and $\sigma$ are both elements of $W(D_n)$. If $\lambda_i$ is odd,
then
$$w_i = (\overset{+}{\nu_i + 1} \;\; \overset{+}{\nu_i + 2} \;\;
\overset{+}{\nu_i + 4} \;\;  \cdots \overset{+}{\nu_i + \lambda_i
-3} \;\; \overset{-}{\nu_i + \lambda_i-1} \;\; \overset{+}{\nu_i +
\lambda_i} \;\; \overset{+}{\nu_i + \lambda_i-2} \cdots
\overset{+}{\nu_i + 5} \;\; \overset{+}{\nu_i + 3})$$ whereas if
$\lambda_i$ is even, then
$$w_i = (\overset{+}{\nu_i + 1} \;\; \overset{+}{\nu_i + 3} \;\; \overset{+}{\nu_i + 5}
\cdots \overset{+}{\nu_i + \lambda_i-3} \;\; \overset{-}{\nu_i +
\lambda_i-1} \;\; \overset{+}{\nu_i + \lambda_i} \;\;
\overset{+}{\nu_i + \lambda_i -2} \cdots \overset{+}{\nu_i + 4}
\;\; \overset{+}{\nu_i + 2}).
$$
In either case, $w_i$ is a negative $\lambda_i$-cycle. Hence $w
\in X$. We now consider $\ell(\tau)$ and $\ell(\sigma)$. Firstly,
note that $\sigma$ is a product of distinct mutually commuting
fundamental reflections. Hence $\ell(\sigma) = \sum_{i=1}^k
\lceil\frac{\lambda_i-1}{2}\rceil$. Now $\tau$ can be split into a
product $\tau = \tau'\tau''$ where $\tau'$ consists of the
2-cycles of $\tau$ and $\tau'' =
(\overset{-}{\nu_1})(\overset{-}{\nu_2})\cdots
(\overset{-}{\nu_k})$. Now $\ell(\tau') = \sum_{i=1}^k
\lfloor\frac{\lambda_i-1}{2}\rfloor$ and $N(\tau)$ consists of
roots of the form $e_a - e_b$ where for some $i$, $\{a, b\}
\subseteq \{\nu_i + 1, \ldots, \nu_i + \lambda_i\}$. Also it is
not hard to see that, thinking of $\tau''$ as an element of $W(B_n)$,%
\[ N(\tau'') = \cup_{i=1}^k \left[ \{e_{\nu_i+\lambda_i}\} \cup \{e_{\nu_i + \lambda_i}
\pm e_{b}: \nu_i + \lambda_i < b \leq n\}\right].\] %
Therefore $N(\tau') \cap N(\tau'') = \emptyset$. Hence $\ell(\tau) = \ell(\tau') +
\ell(\tau'')$. If $W = W(B_n)$ we get \linebreak $\ell(\tau'') = k + 2\sum_{i=1}^k \mu_i$ and hence
$\ell(\tau) =\sum_{i=1}^k \lfloor\frac{\lambda_i-1}{2}\rfloor + k + 2\sum_{i=1}^k \mu_i$.
If $W = W(D_n)$ then $\ell(\tau'') =  2\sum_{i=1}^k \mu_i$ and hence $\ell(\tau)
=\sum_{i=1}^k \lfloor\frac{\lambda_i-1}{2}\rfloor +
2\sum_{i=1}^k \mu_i$. \\

Now $w = \tau\sigma$, so $\ell(w) \leq \ell(\tau) + \ell(\sigma)$.
If $W =
W(B_n)$, then \begin{eqnarray*} \ell(w) &\leq& \ell(\tau) + \ell(\sigma) \\
&=& \sum_{i=1}^k \lfloor\frac{\lambda_i-1}{2}\rfloor + k +
2\sum_{i=1}^k \mu_i + \sum_{i=1}^k
\lceil\frac{\lambda_i-1}{2}\rceil \\
&=& \sum_{i=1}^k \lambda_i + 2\mu_i\\
&=& l_{\min}(X) \leq \ell(w).\end{eqnarray*}%
A similar calculation when $W = W(D_n)$ shows again that $\ell(w) \leq \ell(\tau) +
\ell(\sigma) = l_{\min}(X) \leq \ell(w)$. Therefore in each case, $\ell(w) = \ell(\tau) +
\ell(\sigma) = l_{\min}(X)$. Hence $e(w) = 0$ and $w$ has minimal length in $X$. Since
$w$ is cuspidal, $E(w) = e(w) = 0$ and we have therefore proved Proposition \ref{abdmin}.
\end{proof}

\paragraph{Proof of Theorem \ref{ex0minlength}} 
We argue by induction on the rank of $W$ -- when $W$ has rank one the theorem clearly holds. By induction and
 Lemma \ref{basics}(iii) we may suppose
$W$ is an irreducible Coxeter group. If $X \cap W_J \neq \emptyset$ for some $J {\small{\varsubsetneqq}} R$, then induction and Lemma \ref{minpara} yield the result. So we may also assume $X$ is a cuspidal class of $W$. Then the cases when  $W$ is of type $A_{n}, B_n$ and $D_n$ are dealt with in Proposition \ref{abdmin}, while the case when $W$ is isomorphic to
$\dih(2m)$ is easy to handle. For the remaining exceptional irreducible Coxeter groups we
had to resort to using {\sc Magma} \cite{magma}. The sizes of $X_{\min}^0$, where
$X_{\min}^0$ denotes the set of elements in X which have minimal length in $X$ and excess
zero, are itemized in the appendix as well as giving a representative element of this
set. In all cases $X_{\min}^0$ is non-empty, whence Theorem \ref{ex0minlength} is proven.

\qed

\section*{Appendix}

Tables 2--5 give cuspidal classes in exceptional Weyl groups. Each class is labeled using
the system in Carter \cite{carter}. Detailed information about these classes is given in
\cite{gkpf}, Tables B.1 -- B.6.  For a conjugacy class $X$ let $X_{\min}$ be the set of
elements of minimal length in $X$ and $X^0_{\min}$ be the set of $w \in X_{\min}$ for
which $e(x) = E(x) = 0$. For each class $X$ we give $|X_{\min}|$, $|X^0_{\min}|$ and a
representative $w \in X^0_{\min}$. Tables 6 and 7 give cuspidal classes in $H_3$ and
$H_4$. For these tables the label of the class is its address in the CHEVIE list
\cite{chevie}. See also \cite{gkpf}. We have included Coxeter elements for completeness, even though they are covered by Lemma \ref{coxel}. And finally, to
improve readability, we write the element, say,
$r_1r_3r_2r_4$ as $1324$.

\begin{table}[h!bt]
\begin{center}
\begin{tabular}{llll}
 \hline \vspace*{-4mm}\\ Class $X$ & $|X_{\min}|$ & $|X^0_{\min}|$ &
$w \in X^0_{\min}$
\\ \hline
$F_4$ &8&2&1324\\
$B_4$ &14&2&124323\\
$F_4(a_1)$&16&16&12132343\\
$D_4$&8&6&1232343234\\
$C_3 + A_1$&8&6&1213213234\\
$D_4(a_1)$&12&12&121321343234\\
$A_3 + \tilde A_1$&16&12&12132132343234\\
$A_2 + \tilde A_2$&16&16&1213213432132343\\
$4A_1$&1&1&$w_0$\\
 \hline
\end{tabular}
\vspace*{-3mm}\caption{Cuspidal Classes in $F_4$} \label{table3}
\end{center}
\end{table}

\afterpage\clearpage

\begin{table}[h!bt]
\begin{center}
\begin{tabular}{llll}
 \hline \vspace*{-4mm}\\ Class $X$ & $|X_{\min}|$ & $|X^0_{\min}|$ &
$w \in X^0_{\min}$
\\ \hline
$E_6$&32&2&142365\\
$E_6(a_1)$&80&4&12542346\\
$E_6(a_2)$&144&36&231423154654\\
$A_5+A_1$&48&10&23423454231465\\
$3A_2$&80&80&123142314542314565423456\\
 \hline
\end{tabular}
\vspace*{-3mm}\caption{Cuspidal Classes in $E_6$} \label{table4}
\end{center}
\end{table}

\begin{table}[h!bt]
\begin{center}
\begin{tabular}{llll}
 \hline \vspace*{-4mm}\\ Class $X$ & $|X_{\min}|$ & $|X^0_{\min}|$ &
$w \in X^0_{\min}$
\\ \hline
$E_7$&64&2&1423657\\
$E_7(a_1)$&160&4&125423476\\
$E_7(a_2)$&280&18&12465423457\\
$E_7(a_3)$&366&40&1231542365476\\
$D_6 + A_1$&96&10&123423465423457\\
$A_7$&316&58&12365423476542345\\
$E_7(a_4)$&800&422&234354231435465765431\\
$D_6(a_2)+ A_1$&708&190&13542345654231435765423 \\
$A_5+A_2$&420&194&1234231436542314576542345\\
$D_4 + 3A_1$&32&20&1234234542345654234567654234567\\
$2A_3 + A_1$&360&326&423454365423143542654317654234567 \\
$7A_1$&1&1&$w_0$\\
 \hline
\end{tabular}
\vspace*{-3mm}\caption{Cuspidal Classes in $E_7$} \label{table5}
\end{center}
\end{table}

\pagebreak

\begin{table}[h!bt]
\begin{center}
\begin{tabular}{llll}
 \hline \vspace*{-4mm}\\ Class $X$ & $|X_{\min}|$ & $|X^0_{\min}|$ &
$w \in X^0_{\min}$
\\ \hline
$E_8$&128&2&14682357\\
$E_8(a_1)$&320&4&1425423768\\
$E_8(a_2)$&624&10&435423145768\\
$E_8(a_4)$&732&40&12316542376548\\
$E_8(a_5)$&1516&66&1231654237687654\\
$E_7+A_1$&192&10&2423454231437658\\
$D_8$&852&18&242345423165438765\\
$E_8(a_3)$&2696&238&23423546542314768765\\
$D_8(a_1)$&2040&178&1245423654765423876543\\
$E_8(a_7)$&2360&316&3542314654231765423148\\
$E_8(a_6)$&3370&422&235423143546765423145876\\
$E_7(a_2)+A_1$&1758&100&134234542314354265423876\\
$E_6 + A_2$&840&194&12345423143546542314354876\\
$D_8(a_2)$&4996&362&31423547654234587654231456\\
$A_8$&2816&592&1234231436542314765876542345\\
$D_8(a_3)$&7748&910&231423145423657654231435487654\\
$D_6+2A_1$&256&48&34354231435426542345687654234567\\
$A_7+A_1$&2080&422&2314254236542345676542314354265478\\
$E_8(a_8)$&4480&4480&1231431542345654231456765423143546787654\\
$E_7(a_4) + A_1$&11592&5158&123435423456542317654231435426543765876543\\
$2D_4$&4070&1262&12425423456542765423456787654231435426543768\\
$E_6(a_2)+A_2$&16374&7438&34231454234654231435465765428765423143542657\\
$A_5+A_2+A_1$&3752&1910&3142345423654234567654234567876542314354265478\\
$D_5(a_1)+A_3$&15134&4900&4254231435465423143542678765423143542654317687\\
$2A_4$&7952&4058&134542314354265423143546765423143567876542345678\\
$2D_4(a_1)$&15120&15120&142354265423176542314354265487654231435426543176$\cdot$ \\ &&& $\cdot$542348765423\\
$D_4+4A_1$&56&42&124231542314365423143542765423143542654318765423$\cdot$ \\ &&& $\cdot$1435426543176543\\
$2A_3+2A_1$&1260&1090&123142315431654765423143542654387654231435426543$\cdot$
\\ &&& $\cdot$176542345876542345\\
$4A_2$&4480&4480&123142314542314354265437654231435426543176876542$\cdot$ \\ &&& $\cdot$31435426543176542345678765423456\\
$8A_1$&1&1&$w_0$\\
 \hline
\end{tabular}
\vspace*{-3mm}\caption{Cuspidal Classes in $E_8$} \label{table6}
\end{center}
\end{table}

\begin{table}[h!bt]
\begin{center}
\begin{tabular}{llll}
 \hline \vspace*{-4mm}\\ Class $X$ & $|X_{\min}|$ & $|X^0_{\min}|$ &
$w \in X^0_{\min}$
\\ \hline
6&4&2&312\\
8&6&4&21231\\
9&6&6&121232123\\
10&1&1&$w_0$\\
 \hline
\end{tabular}
\vspace*{-3mm}\caption{Cuspidal Classes in $H_3$} \label{table7}
\end{center}
\end{table}

\begin{table}[h!bt]
\begin{center}
\begin{tabular}{llll}
 \hline \vspace*{-4mm}\\ Class $X$ & $|X_{\min}|$ & $|X^0_{\min}|$ &
$w \in X^0_{\min}$
\\ \hline
11&8&2&1324\\
14&12&4&212431\\
15&18&8&32434121\\
16&22&10&2321234121\\
18&24&24&121213212343\\
19&34&26&32121321234121\\
21&12&10&2132121321234121\\
22&24&20&2321234321234121\\
23&38&30&213212134321234121\\
24&40&40&12121321213214321234\\
25&36&28&2132121321234321234121\\
26&24&24&121213212134321213212343\\
27&56&52&12123212132123432121321234\\
28&40&38&1212132123432121321234321234\\
29&60&60&121213212132124321213214321234\\
30&24&24&121213212132432121321234321213212343\\
31&36&34&12121321213212343212132123432121321234\\
32&40&40&1212132121321243212132124321213214321234\\
33&24&24&1212132121321432121321234321213212343212$\cdot$ \\ &&& $\cdot$13212343\\
34&1&&$w_0$\\
 \hline
\end{tabular}
\vspace*{-3mm}\caption{Cuspidal Classes in $H_4$} \label{table8}
\end{center}
\end{table}


\begin{thebibliography}{99}
\bibitem{titsref} N. Bourbaki. {\em Lie Groups and Lie Algebras Ch. 4--6} Springer-Verlag (2002). 
\bibitem{magma} J.J. Cannon and C. Playoust. {\em An Introduction to Algebraic Programming with \textsc{Magma}} [draft], Springer-Verlag (1997).
\bibitem{carter} R.W. Carter. {\em Conjugacy Classes in the Weyl
Group}, Compositio. Math. {\bf 25}, Facs. 1 (1972), 1--59.%
\bibitem{coxeter}H.S.M. Coxeter. {\em The complete enumeration of finite groups of the form
$R_i^2 = (R_iR_j)^{k_{ij}} = 1$}, J. London Math. Soc. 10 (1935),
21--25.
\bibitem{chevie} M. Geck, G. Hiss, F. L\"{u}beck, G.Malle, G.Pfeiffer.  {\em CHEVIE -- A system for
computing and processing generic character tables}, Appl. Algebra
Engrg. Comm. Comput. 7 (1996), 175--210.
\bibitem{gkpf} M. Geck and G. Pfeiffer. {\em Characters of Finite
Coxeter Groups and Iwahori-Hecke Algebras}, LMS Monographs, New
Series 21, (2000).%
\bibitem{invprod} S.B. Hart and P.J. Rowley. {\em Involution Products in Coxeter Groups},  J. Group Theory 14 (2011), no.2, 251 -- 259.
\bibitem{humphreys} J.E. Humphreys. {\em Reflection Groups and Coxeter Groups}, Cambridge Studies in Advanced Mathematics,
 29 (1990).




\end{thebibliography}
\end{document}